\pdfoutput=1
\documentclass[11pt]{article}

\usepackage[T1]{fontenc}
\usepackage[margin=1in]{geometry}
\usepackage{amsmath,amssymb,amsthm,mathtools,bm}
\usepackage{graphicx}
\usepackage{microtype}
\usepackage{xurl}
\usepackage{enumitem}
\usepackage[colorlinks=true,linkcolor=blue,citecolor=blue,urlcolor=blue]{hyperref}

\newtheorem{theorem}{Theorem}[section]
\newtheorem{proposition}[theorem]{Proposition}
\newtheorem{lemma}[theorem]{Lemma}
\newtheorem{corollary}[theorem]{Corollary}
\newtheorem{assumption}[theorem]{Assumption}
\theoremstyle{definition}
\newtheorem{definition}[theorem]{Definition}

\newcommand{\keywords}[1]{\par\smallskip\noindent\textbf{Keywords.} #1}
\newcommand{\subclass}[1]{\par\smallskip\noindent\textbf{Mathematics Subject Classification.} #1}

\newcommand{\Prob}{\mathbb P}
\newcommand{\E}{\mathbb E}
\newcommand{\R}{\mathbb R}
\newcommand{\1}{\mathbf 1}
\newcommand{\CP}{\mathcal{CP}}
\newcommand{\DNN}{\mathcal{DNN}}
\newcommand{\diag}{\operatorname{diag}}
\newcommand{\supp}{\operatorname{supp}}

\begin{document}

\title{Exactness of the DNN Relaxation for Random Standard Quadratic Programs}

\author{Xin Chen\\[0.3em]
\small H. Milton Stewart School of Industrial and Systems Engineering\\
\small Georgia Institute of Technology, Atlanta, GA, USA\\
\small \texttt{xin.chen@isye.gatech.edu}\\
\small ORCID: \url{https://orcid.org/0000-0002-5168-4823}}

\date{}

\maketitle

\begin{abstract}
We study the doubly nonnegative (DNN) relaxation of the standard quadratic optimization problem
\[
\min\{x^\top Qx:\ x\in\Delta^{n-1}\},\qquad \Delta^{n-1}:=\{x\in\R_+^n:\ \1^\top x=1\},
\]
for random symmetric matrices with independent diagonal and off-diagonal entries. Let $m_n:=\min_{1\le i\le n} Q_{ii}$ and set $M:=Q-m_nE$, where $E$ is the all-ones matrix. The negative off-diagonal entries of $M$ define a defect graph $G_n^-$. Under entrywise independence, absolute continuity, and the tail-decay condition $n^5\E[F_O(m_n)^4]\to 0$, where $F_O$ is the off-diagonal distribution function, we prove that with probability tending to one every defect component has size at most $4$. On this event, the shifted DNN value decomposes over defect components. Since the DNN and completely positive cones coincide in dimensions at most four, each local relaxation is exact. A finite KKT-candidate argument gives local uniqueness, and absolute continuity rules out ties, so the global DNN optimizer is unique and rank one. The graph estimate uses the fact that every connected component of size at least five contains a tree on exactly five vertices. For Gaussian orthogonal ensemble data, we prove the explicit bound
\[
\Prob\bigl(\text{the DNN optimizer is unique and rank one}\bigr)
\ge 1-K\frac{(\ln n)^2}{n^3}.
\]
On the same event, the exact optimizer can be recovered in $O(n^2)$ time by constructing the defect graph and solving constant-size local KKT systems. We also verify the tail condition for variance-tuned Gaussian Wigner models, heavy-tailed laws, and finite-lower-endpoint laws.
\keywords{standard quadratic program ; doubly nonnegative relaxation ; completely positive cone ; Gaussian orthogonal ensemble ; exactness ; random graph}
\subclass{90C20 ; 90C22 ; 60B20 ; 05C80}
\end{abstract}

\section{Introduction}
The standard quadratic optimization problem (StQP)
\[
\min\{x^\top Qx:\ x\in\Delta^{n-1}\}
\]
is a classical nonconvex quadratic program with deep ties to copositive programming, graph formulations, and continuous reformulations of discrete optimization problems; see Bomze~\cite{Bomze1998} and Bomze and de Klerk~\cite{BomzeDeKlerk2002}. Its completely positive lifting is exact but computationally intractable in general. Replacing the completely positive cone with the doubly nonnegative cone leads to a tractable semidefinite relaxation. This paper investigates that relaxation in a random-data regime.

For random StQPs, the sparsity of optimal solutions was initiated by Chen, Peng, and Zhang~\cite{ChenPengZhang2013}, sharpened by Chen and Peng~\cite{ChenPeng2015}, and further developed by Chen and Pittel~\cite{ChenPittel2021}. Those papers analyze the support size of the primal optimizer. We focus instead on when the DNN relaxation itself is exact, when its optimizer is unique, and when that optimizer is rank one.

The analysis is based on the shifted matrix $M=Q-m_nE$, where $m_n:=\min_{1\le i\le n} Q_{ii}$. The entries $M_{ij}<0$ form a random defect graph $G_n^-$. Once this shift is made, the signs of the off-diagonal entries give a deterministic decomposition over the connected components of $G_n^-$. Under the tail assumptions below, the defect graph has no components of size at least five with probability tending to one. Since $\DNN^r=\CP^r$ for $r\le4$~\cite{Diananda,BermanShaked}, every local DNN problem on a component of size at most four is exact. It then remains to prove local uniqueness and to rule out ties between different components.

Our first main result establishes general conditions for exactness. Under entrywise independence, absolute continuity, and a four-edge tail-decay condition, the DNN relaxation has a unique rank-one optimizer with probability $1-o(1)$. Specializing this framework to GOE data, the second main result gives an explicit failure bound: the DNN relaxation is exact and has a unique rank-one optimizer with probability at least $1-K(\ln n)^2/n^3$. We also record extensions to variance-tuned Gaussian Wigner models, heavy-tailed laws, and finite-lower-endpoint laws. Since the input matrix has $N=n^2$ entries, the constructive proof naturally yields an $O(n^2)$ exact algorithm on the same event: one scans the matrix to build the defect graph, extracts its connected components, and solves only constant-size local KKT systems.

We highlight two features of the analysis. To avoid a naive sum over all connected component sizes $r\ge5$, which leads to a divergent factorial overcount, we observe that every connected component of size at least five contains a tree on exactly five vertices. Thus it is enough to count labeled five-vertex trees. Furthermore, the conclusion is stronger than a vertex-recovery statement: even when the original StQP minimizer is not a vertex, the DNN relaxation remains exact because the defect graph typically breaks into small components, naturally forcing the local DNN problem to be exact on each component.

Exactness of semidefinite and doubly nonnegative relaxations has also been studied in broader QCQP settings, including both deterministic conditions and random-instance results. For quadratically constrained quadratic programs (QCQPs), identifying structural conditions that guarantee exact semidefinite relaxations is a major theme; see, for example, Lavaei and Low~\cite{LavaeiLow2012}, Burer and Ye~\cite{BurerYe2020,BurerYeCorrection2021}, and Wang and K{\i}l{\i}n\c{c}-Karzan~\cite{WangKilincKarzan2021}. Burer and Ye, in particular, also consider random general QCQPs and high-probability exactness of the Shor relaxation. For the standard quadratic program specifically, the problem admits an exact convex conic formulation over the completely positive cone. Replacing this computationally intractable cone with the DNN cone yields a tractable relaxation; see Bomze and de Klerk~\cite{BomzeDeKlerk2002}. Because the CP and DNN cones coincide for dimensions up to four but diverge for dimensions five and higher~\cite{Diananda,BermanShaked,BurerAnstreicherDur2009}, exactness depends strongly on the absence of large connected substructures. Our work complements this literature by showing that, under the random models considered here, the defect graph typically decomposes into low-dimensional components on which the DNN and CP cones coincide.

\section{Universal model and main results}\label{sec:model}
We write $\R_+:=[0,\infty)$ and let $\1$ denote the all-ones vector, with dimension determined by context. For a positive integer $n$, set $[n]:=\{1,\ldots,n\}$; for a finite set $S$, $|S|$ denotes its cardinality. Set $E:=\1\1^\top$ for the all-ones matrix of appropriate dimension. For finite index sets $A$ and $B$, let $E_{A,B}$ denote the $|A|\times |B|$ all-ones matrix; when $A=B$, we write $E_A:=E_{A,A}$. For a generic matrix $X$, $X[A,B]$ denotes the submatrix with row indices $A$ and column indices $B$, and we abbreviate $X[A]:=X[A,A]$.

For sequences, we write $a_n\asymp b_n$ when there exist constants $0<k_1\le k_2<\infty$, independent of $n$, such that $k_1 b_n\le a_n\le k_2 b_n$ for all sufficiently large $n$. For continuous functions, $f(x)\asymp g(x)$ as $x\to x_0$ has the analogous meaning. We write $a_n\sim b_n$ when $a_n/b_n\to1$; for functions, $f(x)\sim g(x)$ as $x\to x_0$ has the analogous meaning.

For a symmetric matrix $Z\in\R^{r\times r}$, define
\[
\begin{aligned}
\vartheta(Z)&:=\min\{\langle Z,Y\rangle:\ \langle E,Y\rangle=1,\ Y\succeq0,\ Y\ge0\},\\
\mu(Z)&:=\min\{x^\top Zx:\ x\in\Delta^{r-1}\},
\end{aligned}
\]
where $\Delta^{r-1}:=\{x\in\R_+^r:\ \1^\top x=1\}$. Here $Y\succeq0$ means positive semidefinite, and $Y\ge0$ means entrywise nonnegative.

We consider the DNN relaxation
\begin{equation}\label{eq:dnnQ}
\vartheta(Q)=\min\{\langle Q,X\rangle:\ \langle E,X\rangle=1,\ X\succeq0,\ X\ge0\}
\end{equation}
of the standard quadratic program.

\begin{assumption}[Matrix generator]\label{ass:generator}
Let $Q=(Q_{ij})_{1\le i,j\le n}$ be a random symmetric matrix such that:
\begin{enumerate}[label=(A\arabic*)]
\item \emph{Entrywise independence.} The diagonal entries $(Q_{ii})_{i=1}^n$ and the upper-triangular off-diagonal entries $(Q_{ij})_{1\le i<j\le n}$ are mutually independent.
\item \emph{Absolute continuity.} The diagonal entries are i.i.d. with distribution function $F_D$, and the off-diagonal entries are i.i.d. with distribution function $F_O$. Both laws are absolutely continuous with respect to Lebesgue measure.
\item \emph{Tail-decay condition.} Writing
\[
 m_n:=\min_{1\le i\le n}Q_{ii},
\]
we have
\begin{equation}\label{eq:tailthreshold}
 n^5\,\E\bigl[F_O(m_n)^4\bigr]\to 0.
\end{equation}
\end{enumerate}
\end{assumption}

After shifting by $m_nE$, define
\begin{equation}\label{eq:def-M}
M:=Q-m_nE.
\end{equation}
Because $\langle E,X\rangle=1$ on the feasible set of \eqref{eq:dnnQ},
\[
\langle Q,X\rangle=m_n+\langle M,X\rangle,
\]
so exactness and uniqueness reduce to the shifted problem
\begin{equation}\label{eq:dnnM}
\vartheta(M)=\min\{\langle M,X\rangle:\ \langle E,X\rangle=1,\ X\succeq0,\ X\ge0\}.
\end{equation}

\begin{definition}[Defect graph]
The defect graph $G_n^-$ is the graph on $[n]$ with edge set
\[
E(G_n^-):=\bigl\{\{i,j\}:\ 1\le i<j\le n,\ M_{ij}<0\bigr\}=\bigl\{\{i,j\}:\ Q_{ij}<m_n\bigr\}.
\]
\end{definition}

We now state our main universal exactness result.

\begin{theorem}[Universal exactness theorem]\label{thm:universal}
Under Assumption~\ref{ass:generator},
\[
\Prob\bigl(\text{the DNN optimizer is unique and rank one}\bigr)\to 1.
\]
More precisely,
\[
\lim_{n\to\infty}\Prob\Bigl(\max_{C\in\mathcal{C}(G_n^-)}|C|\le 4\Bigr)=1,
\]
where $\mathcal{C}(G_n^-)$ denotes the set of connected components of the defect graph $G_n^-$. On that event, the global shifted DNN value equals $\min_{C\in\mathcal{C}(G_n^-)}\vartheta(M[C])$. Since $\DNN^r=\CP^r$ for $r\le4$, each local relaxation is exact, and absolute continuity implies that almost surely all local candidate values are distinct; hence the global DNN optimizer is unique and rank one.
\end{theorem}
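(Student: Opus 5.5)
The proof splits into a probabilistic graph estimate and a deterministic decomposition argument on the resulting event.

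\textbf{Graph estimate.} If some component of $G_n^-$ has at least five vertices, then that component contains a subtree on exactly five vertices: take a spanning tree of the component and prune leaves. We bound the probability of the event $\{\exists$ a 5-vertex tree in $G_n^-\}$ by a union bound over vertex sets $S\subset[n]$ with $|S|=5$ and labeled trees $T$ on $S$. By Cayley's formula there are $5^3=125$ such trees, so the number of pairs $(S,T)$ is at most $125\binom n5\le Kn^5$. Each tree has four edges $\{i,j\}$, and each requires $Q_{ij}<m_n$. Conditioning on the diagonal, the off-diagonal entries are independent of $m_n$ and of each other, so the four edge events have conditional probability $F_O(m_n)^4$. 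Averaging over the diagonal gives
\[
\Prob\bigl(\max_C|C|\ge5\bigr)\le 125\,n^5\,\E[F_O(m_n)^4]\to0
\]
by \eqref{eq:tailthreshold}.

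\textbf{Decomposition on the event.} Suppose all components have size at most $4$. Let $X$ be feasible for \eqref{eq:dnnM}. Entries $X_{ij}$ with $i,j$ in different components satisfy $M_{ij}\ge0$ and $X_{ij}\ge0$, so deleting them does not increase the objective. The problem is that the truncated block-diagonal matrix $\bigoplus_C X[C]$ must be shown feasible. It is PSD because each $X[C]$ is a principal submatrix of a PSD matrix. However, its total sum $s:=\sum_C\langle E_C,X[C]\rangle$ may be less than $1$. Write $t_C:=\langle E_C,X[C]\rangle$. Note that $M_{ii}=Q_{ii}-m_n\ge0$ and $\langle M,X\rangle\ge\sum_C\langle M[C],X[C]\rangle\ge\sum_C t_C\,\vartheta(M[C])$ by homogeneity. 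Also $\vartheta(M[C])\le \min_{i\in C}M_{ii}$, and the component containing the argmin diagonal index has local value $\le 0$, so $\vartheta^*:=\min_C\vartheta(M[C])\le0$. The key comparison is then $\sum_C t_C\vartheta(M[C])\ge s\,\vartheta^*\ge\vartheta^*$. This uses $s\le1$ and $\vartheta^*\le0$, where $s\le 1$ holds because the off-block entries are nonnegative. Hence $\vartheta(M)\ge\vartheta^*$. The reverse inequality follows by embedding a local optimizer. So $\vartheta(M)=\min_C\vartheta(M[C])$.

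\textbf{Exactness and uniqueness.} For $|C|\le4$, $\DNN^{|C|}=\CP^{|C|}$, so $\vartheta(M[C])=\mu(M[C])$ and the local optimal set is the convex hull of $xx^\top$ over local StQP minimizers. For uniqueness, a local StQP minimizer with support $S$ satisfies the KKT system $M[S]x_S=\lambda\1$ with $\1^\top x_S=1$, so its value is $1/(\1^\top M[S]^{-1}\1)$ when $M[S]$ is nonsingular. Because the candidate supports are finitely many and constant in size, every candidate value is a nonconstant rational function of independent absolutely continuous entries. Two distinct candidates, in the same component or in different components, therefore tie only on a null set. Likewise, degenerate situations such as singular $M[S]$ or non-strict complementarity are null events.

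We also need equality in the comparison step to force uniqueness. Equality requires the off-block mass to vanish whenever $\vartheta^*<0$; the case $\vartheta^*=0$ is a null event. It also requires all $t_C$ to concentrate on the optimal component. Hence $X=xx^\top$ for the unique minimizer $x$, which is rank one. The main obstacle is making the "generic distinctness" rigorous, specifically that each candidate value is nonconstant in at least one entry that the competing candidate does not determine. I would handle this by conditioning on all entries except one diagonal entry in the candidate support and using the fact that the value is strictly monotone in that entry.
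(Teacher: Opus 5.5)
Your graph estimate, the decomposition $\vartheta(M)=\min_C\vartheta(M[C])$, and local exactness via $\DNN^r=\CP^r$ all follow the paper. The gap is in the equality step that is supposed to give uniqueness.

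\textbf{The case $\vartheta^*=0$ is not null.} You get vanishing of the off-block mass only from the slack in $s\,\vartheta^*\ge\vartheta^*$, which needs $\vartheta^*<0$. You then discard $\vartheta^*=0$ as a null event, but it is not. The entry $M_{i_\star i_\star}=Q_{i_\star i_\star}-m_n$ is identically $0$. So $\vartheta^*=0$ whenever no component has negative local value. For example, this holds on the event that $G_n^-$ has no edges, which has positive probability for large $n$ by (A3). It is in fact the typical outcome. The vertex $i_\star$ is isolated with probability at least
\[
1-n\,\E[F_O(m_n)]\ \ge\ 1-n\,\E[F_O(m_n)^4]^{1/4}\ \to\ 1 .
\]
Under GOE, a defect edge $\{j,k\}$ has $|M_{jk}|$ of order $(\ln n)^{-1/2}$ while $M_{jj}M_{kk}$ is of order $\ln n$, so its local value is positive. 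The optimizer is then the vertex $e_{i_\star}e_{i_\star}^\top$. In this case $s\,\vartheta^*=\vartheta^*$ for every $s\in[0,1]$. Your chain of inequalities then says nothing about the cross-component entries of an optimal $X$, so uniqueness is not established in the main case.

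The same deterministic zero also makes your claim that singular $M[S]$ is a null event false for $S=\{i_\star\}$. More generally, the entries of $M$ are not independent, since they all share $m_n$. Run the genericity arguments in $Q$-coordinates, using $\lambda_S(M)=\lambda_S(Q)-m_n$. Because the components are random, also take a union bound over all fixed subsets of size at most four.

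\textbf{The missing ingredient.} You need strict positivity of the cross-component entries. Since $F_O$ is continuous and the off-diagonal entries are independent of the diagonal, $Q_{ij}\neq m_n$ almost surely. Hence $M_{ij}>0$ whenever $i$ and $j$ lie in different components. Equality in your first inequality $\langle M,X\rangle\ge\sum_C\langle M[C],X[C]\rangle$ then forces $X_{ij}=0$ off the diagonal blocks, so $s=1$, regardless of the sign of $\vartheta^*$. Equality in $\sum_C t_C\vartheta(M[C])\ge\vartheta^*$ then puts all mass on the unique minimizing component, where local uniqueness gives $X=xx^\top$. This is exactly the paper's route through Corollary~\ref{cor:equality}(iii).

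\textbf{Your tie-breaking device.} Once these repairs are made, it is a legitimate alternative to the paper's argument. By Cramer's rule, $\lambda_S=\det Q[S]/\det K_S$ is linear-fractional in each $Q_{ii}$. Its derivative is $(x_S)_i^2>0$ when $S$ is admissible. So conditioning on all entries except $Q_{ii}$, for $i$ in the symmetric difference of $S$ and $T$, rules out every tie, within or across components, in one stroke. The paper instead uses the dyadic test matrix $D_C$ for within-component ties, and independence plus atomlessness (Lemma~\ref{lem:atomless}) for ties across components.
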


The following theorem provides a quantitative bound for the GOE case, which is proved in Section~\ref{sec:goe}.

\begin{theorem}[GOE specialization]\label{thm:goe-main}
Assume $Q$ is GOE, i.e., the diagonal entries are i.i.d. $N(0,1)$ and the off-diagonal entries are i.i.d. $N(0,1/2)$, mutually independent. Then there exists $K<\infty$ such that
\[
\Prob\bigl(\text{the optimizer of \eqref{eq:dnnQ} is unique and rank one}\bigr)
\ge 1-K\frac{(\ln n)^2}{n^3}.
\]
Consequently, with the same probability, the DNN relaxation is exact and the original StQP has a unique optimizer.
\end{theorem}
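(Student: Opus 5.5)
The plan is to reduce Theorem~\ref{thm:goe-main} to the deterministic part of Theorem~\ref{thm:universal} and then make the graph estimate quantitative. On the event $\mathcal{E}_n:=\{\max_{C\in\mathcal{C}(G_n^-)}|C|\le4\}$, Theorem~\ref{thm:universal} already gives almost surely that the DNN optimizer is unique and rank one. (The per-component decomposition, $\DNN^r=\CP^r$ for $r\le4$, local KKT uniqueness, and no ties by absolute continuity are all deterministic or almost-sure statements.) A rank-one optimizer $X=xx^\top$ with $x\in\Delta^{n-1}$ is feasible for the StQP with the same value, so the relaxation is exact. Uniqueness of the StQP optimizer follows because any StQP optimizer $x$ yields a DNN optimizer $xx^\top$. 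Hence it suffices to show $\Prob(\mathcal{E}_n^c)\le K(\ln n)^2/n^3$.

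For the graph bound, use the observation from the introduction: every component with at least five vertices contains a tree on exactly five vertices. There are $\binom n5\cdot 5^3$ labeled five-vertex trees, each with four edges. Conditional on the diagonal, and hence on $m_n$, the off-diagonal entries are i.i.d. and independent of $m_n$, so each edge is present independently with probability $F_O(m_n)$. A union bound then gives
\[
\Prob(\mathcal{E}_n^c)\le 125\binom n5\,\E\bigl[F_O(m_n)^4\bigr]\le \tfrac{25}{24}\,n^5\,\E\bigl[F_O(m_n)^4\bigr].
\]
(This is also what yields the qualitative statement under \eqref{eq:tailthreshold}.)

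The main work is to show $\E[F_O(m_n)^4]\le K(\ln n)^2/n^8$ for GOE. Here $F_O(t)=\Phi(\sqrt2\,t)$, and $m_n$ is the minimum of $n$ standard normals, so typically $m_n\approx-\sqrt{2\ln n}$. The Mills ratio gives $\Phi(-s)\le \phi(s)/s$, so
\[
F_O(m_n)^4\lesssim \frac{e^{-4m_n^2}}{m_n^4}.
\]
The density of $m_n$ is $n\phi(t)(1-\Phi(t))^{n-1}$. Write $u=-t>0$ and split the range of $u$.

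\emph{Case $u\ge u_0:=\sqrt{2\ln n}-c\ln\ln n/\sqrt{\ln n}$.} Bound $(1-\Phi)^{n-1}\le1$ and compute
\[
\int n\,\phi(u)\,\Phi(-\sqrt2 u)^4\,du\lesssim n\int e^{-u^2/2-4u^2}u^{-4}\,du,
\]
which is of order $n\,e^{-9u_0^2/2}$. Since $u_0^2\approx 2\ln n-2c\ln\ln n$, this is about $n\cdot n^{-9}(\ln n)^{9c}$, which is far smaller than needed. So the exponent actually needs checking. In the typical regime one expects $F_O(m_n)\approx\Phi(-2\sqrt{\ln n})\approx n^{-4}$. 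Then $F_O^4\approx n^{-16}$ and $n^5\cdot n^{-16}$ is tiny. The $(\ln n)^2/n^3$ rate must therefore come from the atypical event that $m_n$ is unusually large, i.e.\ close to $0$, where $F_O(m_n)$ is of constant order.

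\emph{Case $u$ small.} The probability that $m_n>-a$ is $(1-\Phi(a))^n\le\exp(-n\Phi(-a))$. Choose $a$ with $n\Phi(-a)\approx 8\ln n$; this gives $\Prob(m_n>-a)\le n^{-8}$. On the complementary range $m_n\le -a$, with $a^2\approx 2\ln n-2\ln\ln n$ (up to constants), one has $F_O(m_n)^4\le\Phi(-\sqrt2 a)^4$. This is roughly $\bigl(\Phi(-a)^2\cdot\mathrm{poly}\bigr)^4\approx\bigl((\ln n)/n\bigr)^8\cdot\mathrm{poly}(\ln n)$.

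Combining the two cases, $n^5\,\E[F_O(m_n)^4]$ appears to be $O(n^{-3})$ up to logarithms. I expect the main obstacle to be the bookkeeping of the logarithmic factors needed to land exactly on $(\ln n)^2/n^3$. I would carry this out by writing the integral in the variable $p=\Phi(m_n)$, which is the minimum of $n$ uniforms. Using $\Phi(-\sqrt2 u)\lesssim \Phi(-u)^2\sqrt{\ln(1/\Phi(-u))}$ gives $F_O(m_n)\lesssim p^2\ln(1/p)^{1/2}$. Then
\[
\E\bigl[p^8\ln(1/p)^2\bigr]\lesssim n^{-8}(\ln n)^2,
\]
by the Beta$(1,n)$ moments, which yields the bound $K(\ln n)^2/n^3$.
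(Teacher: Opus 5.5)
Your final argument follows the paper's own route: the five-tree union bound of Lemma~\ref{lem:five-tree-univ}, then $F_O(m_n)=\Phi(\sqrt2\,\Phi^{-1}(p))\lesssim p^2\sqrt{\ln(1/p)}$ with $p=\Phi(m_n)\sim\mathrm{Beta}(1,n)$, as in Lemma~\ref{lem:moments-goe} and Theorem~\ref{thm:goe-fivetree}. The one needed repair is that this bound holds only for $p\le 1/2$ (it fails as $p\to1$), so you must split off $\{p>1/2\}$, which has probability $2^{-n}$, as the paper does.

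Discard the middle heuristic: $\Phi(-2\sqrt{\ln n})\asymp n^{-2}/\sqrt{\ln n}$, not $n^{-4}$. So the $(\ln n)^2/n^3$ rate comes from the typical regime $m_n\approx-\sqrt{2\ln n}$, where $F_O(m_n)$ is $n^{-2}$ up to logarithms, not from $m_n$ near $0$, whose contribution is exponentially small.
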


\section{Deterministic decomposition over defect components}\label{sec:deterministic}
Let $C_1,\dots,C_t$ be the connected components of $G_n^-$. We use the notation introduced in Section~\ref{sec:model}; thus
\[
\langle E_{C_a},X[C_a]\rangle=\sum_{i,j\in C_a}X_{ij},
\qquad
\langle E_{C_a,C_b},X[C_a,C_b]\rangle=\sum_{i\in C_a}\sum_{j\in C_b}X_{ij}.
\]

\begin{proposition}[Global value equals the best local value]\label{prop:global-local}
We have
\[
\vartheta(M)=\min_{1\le a\le t}\vartheta(M[C_a]).
\]
Moreover, if $Y^\star$ is optimal for a component $C_a$ attaining the minimum local value, then its block embedding into $[n]\times[n]$ is globally feasible and optimal for \eqref{eq:dnnM}.
\end{proposition}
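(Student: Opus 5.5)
We prove the two inequalities $\vartheta(M)\le\min_a\vartheta(M[C_a])$ and $\vartheta(M)\ge\min_a\vartheta(M[C_a])$ separately. The upper bound is the easy direction. Take any component $C_a$ and any feasible $Y$ for $\vartheta(M[C_a])$, and let $X$ be its zero-padded block embedding into $[n]\times[n]$. Then $X\succeq0$, $X\ge0$, $\langle E,X\rangle=\langle E_{C_a},Y\rangle=1$, and $\langle M,X\rangle=\langle M[C_a],Y\rangle$. So $\vartheta(M)\le\vartheta(M[C_a])$ for every $a$. Applied to an optimal $Y^\star$ on a minimizing component, the same embedding gives the second claim once the matching lower bound is established.

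For the lower bound, fix a feasible $X$ and split the objective into diagonal blocks and cross blocks:
\[
\langle M,X\rangle=\sum_a\langle M[C_a],X[C_a]\rangle+\sum_{a\ne b}\langle M[C_a,C_b],X[C_a,C_b]\rangle.
\]
For $i\in C_a$, $j\in C_b$ with $a\ne b$, the pair $\{i,j\}$ is not an edge of $G_n^-$, so $M_{ij}\ge0$. Since $X\ge0$, every cross term is nonnegative and can be dropped, giving $\langle M,X\rangle\ge\sum_a\langle M[C_a],X[C_a]\rangle$.

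Set $s_a:=\langle E_{C_a},X[C_a]\rangle\ge0$. Each principal submatrix $X[C_a]$ is DNN. If $s_a>0$, then $X[C_a]/s_a$ is feasible for the local problem, so $\langle M[C_a],X[C_a]\rangle\ge s_a\vartheta(M[C_a])$. If $s_a=0$, entrywise nonnegativity forces $X[C_a]=0$, and the inequality holds trivially. This yields
\[
\langle M,X\rangle\ge\sum_a s_a\,\vartheta(M[C_a]).
\]
The main obstacle is that $\sum_a s_a$ need not equal $1$: it equals $1-\sum_{a\ne b}\langle E_{C_a,C_b},X[C_a,C_b]\rangle\le1$, since the cross mass is nonnegative. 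So the right-hand side is a sub-convex combination, and we need the sign of $\mu^*:=\min_a\vartheta(M[C_a])$ to conclude.

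\emph{Case $\mu^*\le0$.} Here I would avoid dropping the cross terms. Instead, use $M_{ij}\ge0\ge\mu^*$ off the diagonal blocks, so each cross term satisfies $\langle M[C_a,C_b],X[C_a,C_b]\rangle\ge\mu^*\langle E_{C_a,C_b},X[C_a,C_b]\rangle$. Combined with $\langle M[C_a],X[C_a]\rangle\ge s_a\mu^*$, this gives $\langle M,X\rangle\ge\mu^*\langle E,X\rangle=\mu^*$.

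\emph{Case $\mu^*>0$.} The cruder bound $\langle M,X\rangle\ge\sum_a s_a\vartheta(M[C_a])\ge(\sum_a s_a)\mu^*$ is not sufficient, because $\sum_a s_a$ may be less than $1$. So I would argue that this case cannot occur. The diagonal of $M$ is nonnegative with $M_{i^*i^*}=0$ at the index $i^*$ attaining $m_n$. The singleton (or any component) containing $i^*$ then has local value at most $e_{i^*}^\top M[C]e_{i^*}=0$, so $\mu^*\le0$ always. The first case therefore covers everything.

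Taking the infimum over feasible $X$ gives $\vartheta(M)\ge\mu^*$, which with the upper bound proves equality. The embedding of $Y^\star$ then attains $\mu^*$ and is therefore globally optimal.
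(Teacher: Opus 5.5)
Your proof is correct and follows essentially the same route as the paper: you embed a local optimizer for the upper bound, and for the lower bound you use block masses, nonnegativity of cross-component entries of $M$, and the zero diagonal entry $M_{i^\star i^\star}=0$ to force $\min_a\vartheta(M[C_a])\le 0$. The only difference is bookkeeping: you bound the cross terms below by $\mu^*$ times the cross mass, whereas the paper drops them and then uses $(1-\eta)\mu\ge\mu$, which is the same inequality.
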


\begin{proof}
Set
\[
\mu:=\min_{1\le a\le t}\vartheta(M[C_a]).
\]
Embedding any local optimizer for a component attaining the minimum local value yields a globally feasible matrix with objective value $\mu$, so $\vartheta(M)\le \mu$.

Conversely, let $X$ be any feasible matrix for \eqref{eq:dnnM}. For each component $C_a$, define the block mass
\[
\tau_a:=\langle E_{C_a},X[C_a]\rangle\ge0.
\]
Let
\[
\eta:=\sum_{a\neq b}\langle E_{C_a,C_b},X[C_a,C_b]\rangle\ge0.
\]
Since $X\ge0$ and $\langle E,X\rangle=1$, we have
\[
1=\sum_{a=1}^t\tau_a+\eta.
\]
If $\tau_a>0$, then $Y^{(a)}:=X[C_a]/\tau_a$ is feasible for the local problem on $C_a$, so
\[
\langle M[C_a],X[C_a]\rangle\ge \tau_a\vartheta(M[C_a])\ge \tau_a\mu.
\]
Summing gives
\begin{equation}\label{eq:block-lb-univ}
\sum_{a=1}^t \langle M[C_a],X[C_a]\rangle\ge (1-\eta)\mu.
\end{equation}
Now if $i$ and $j$ lie in different connected components of $G_n^-$, then by definition $M_{ij}\ge0$. Since also $X_{ij}\ge0$, the cross-block contribution to the objective is nonnegative, and therefore
\begin{equation}\label{eq:cross-nonneg-univ}
\langle M,X\rangle\ge \sum_{a=1}^t \langle M[C_a],X[C_a]\rangle.
\end{equation}
Combining \eqref{eq:block-lb-univ} and \eqref{eq:cross-nonneg-univ} gives
\[
\langle M,X\rangle\ge (1-\eta)\mu.
\]
Finally, $\mu\le0$ because some index $i_\star$ satisfies $M_{i_\star i_\star}=0$ by construction. If $i_\star\in C_a$, then the local feasible matrix $e_{i_\star}e_{i_\star}^\top$ on block $C_a$ attains value $0$, so $\vartheta(M[C_a])\le0$ and therefore $\mu\le0$. Since $\eta\ge0$ and $\mu\le0$, we have $(1-\eta)\mu\ge\mu$. Hence $\langle M,X\rangle\ge\mu$ for every feasible $X$, which proves $\vartheta(M)\ge\mu$.

Together with the first paragraph, this gives the claimed identity and the optimality of the embedded local optimizer.
\end{proof}

\begin{corollary}[Equality characterization]\label{cor:equality}
Let $\mu=\min_a \vartheta(M[C_a])$ and let $X$ be feasible for \eqref{eq:dnnM} with $\langle M,X\rangle=\mu$. Define $\tau_a$ and $\eta$ as above. Then:
\begin{enumerate}[label=(\roman*)]
\item if $\tau_a>0$, then $\vartheta(M[C_a])=\mu$ and $X[C_a]/\tau_a$ is locally optimal on $C_a$;
\item if $\mu<0$, then necessarily $\eta=0$;
\item if, in addition, every cross-component entry of $M$ is strictly positive and there is a unique component attaining the minimum local value, then $X$ is supported entirely on that component.
\end{enumerate}
\end{corollary}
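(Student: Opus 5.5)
The plan is to reread the chain of inequalities in the proof of Proposition~\ref{prop:global-local} under the hypothesis $\langle M,X\rangle=\mu$. That proof gives
\[
\mu=\langle M,X\rangle\ \ge\ \sum_{a}\langle M[C_a],X[C_a]\rangle\ \ge\ \sum_a\tau_a\vartheta(M[C_a])\ \ge\ \Bigl(\sum_a\tau_a\Bigr)\mu=(1-\eta)\mu\ \ge\ \mu .
\]
Since the two ends agree, every inequality in the chain is an equality, and each item follows by reading off which equality is forced.

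For (i), I will use the equality $\sum_a\tau_a\vartheta(M[C_a])=\sum_a\tau_a\mu$. This is a sum of nonnegative terms $\tau_a(\vartheta(M[C_a])-\mu)$, because $\mu$ is the minimum local value, so each term vanishes. Hence $\tau_a>0$ forces $\vartheta(M[C_a])=\mu$. Likewise, the equality $\sum_a\langle M[C_a],X[C_a]\rangle=\sum_a\tau_a\vartheta(M[C_a])$ is again a sum of nonnegative terms $\langle M[C_a],X[C_a]\rangle-\tau_a\vartheta(M[C_a])$; this uses local feasibility of $X[C_a]/\tau_a$ when $\tau_a>0$, and the fact that the term is trivially zero when $\tau_a=0$, since then $X[C_a]=0$ by entrywise nonnegativity. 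So each term vanishes, and when $\tau_a>0$ this says $X[C_a]/\tau_a$ attains $\vartheta(M[C_a])$. It is feasible because $X[C_a]$ is a principal submatrix of a DNN matrix and has unit $E$-mass after scaling, so it is locally optimal.

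For (ii), the equality $(1-\eta)\mu=\mu$ gives $\eta\mu=0$, so $\mu<0$ forces $\eta=0$.

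For (iii), the first equality $\langle M,X\rangle=\sum_a\langle M[C_a],X[C_a]\rangle$ says
\[
\sum_{a\ne b}\sum_{i\in C_a,\,j\in C_b}M_{ij}X_{ij}=0 .
\]
Each product $M_{ij}X_{ij}$ is nonnegative, so each vanishes. If all cross entries $M_{ij}$ are strictly positive, then every cross entry $X_{ij}=0$, i.e.\ $\eta=0$; this conclusion holds without using the sign of $\mu$. By (i), every $a$ with $\tau_a>0$ attains the minimum, so uniqueness of the minimizing component $C_{a^\star}$ gives $\tau_a=0$ for all $a\ne a^\star$. For such $a$, nonnegativity yields $X[C_a]=0$ entrywise. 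Since all off-diagonal blocks also vanish, $X$ is supported on $C_{a^\star}\times C_{a^\star}$.

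The main obstacle is only bookkeeping. The case $\tau_a=0$ in (i) must be handled through entrywise nonnegativity rather than division. In (iii), the vanishing of off-diagonal blocks must be taken from the strict-positivity hypothesis rather than from (ii), so that the case $\mu=0$ is covered as well.
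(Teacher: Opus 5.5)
Your proposal is correct and follows the paper's proof. Both arguments force equality throughout the chain $\langle M,X\rangle\ge\sum_a\langle M[C_a],X[C_a]\rangle\ge(1-\eta)\mu\ge\mu$ from Proposition~\ref{prop:global-local}, read off (ii) from $\eta\mu=0$, and in (iii) kill the cross blocks using strict positivity of $M$, which also covers $\mu=0$, before invoking (i) and uniqueness. Your termwise treatment of (i) is slightly more complete than the paper's, since you explicitly check that $X[C_a]/\tau_a$ attains $\vartheta(M[C_a])$; the paper's contradiction argument leaves that implicit.
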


\begin{proof}
If some active block $C_a$ had value strictly larger than $\mu$, the proof of Proposition~\ref{prop:global-local} would dictate
\[
\langle M,X\rangle > (1-\eta)\mu\ge \mu,
\]
which is a contradiction. This proves (i). If $\mu<0$ and $\eta>0$, then $(1-\eta)\mu>\mu$, again impossible; this is (ii). For (iii), since $\mu\le 0$ and $\eta \ge 0$, we have $(1-\eta)\mu \ge \mu$. Combining this with the block lower bounds \eqref{eq:block-lb-univ} and the cross-component decomposition of $\langle M,X\rangle$ yields
\[
\begin{aligned}
\langle M,X\rangle
&= \sum_{a=1}^t \langle M[C_a],X[C_a]\rangle
 + \sum_{a\neq b}\langle M[C_a,C_b],X[C_a,C_b]\rangle\\
&\ge (1-\eta)\mu
 + \sum_{a\neq b}\langle M[C_a,C_b],X[C_a,C_b]\rangle\\
&\ge \mu
 + \sum_{a\neq b}\langle M[C_a,C_b],X[C_a,C_b]\rangle.
\end{aligned}
\]
If $\langle M,X\rangle = \mu$, the cross-component sum must be nonpositive. Since $M_{ij} > 0$ strictly for cross-component entries and $X_{ij} \ge 0$, all cross-component blocks $X[C_a,C_b]$ must be identically zero, which forces $\eta=0$. By (i), every active block must attain the minimal local value. Since exactly one component attains this value, denote it by $C_\star$; only $C_\star$ can be active. For any inactive block $C_a$ with $a\neq \star$, its block mass satisfies $\tau_a=\langle E_{C_a},X[C_a]\rangle=0$. Since $X\ge0$ entrywise, every entry of $X[C_a]$ is zero. Therefore every entry of $X$ outside the $C_\star\times C_\star$ block is zero, so $X$ is supported entirely on $C_\star$.
\end{proof}

\section{Local exactness and local uniqueness on blocks of size at most four}\label{sec:smallblocks}
We rely on the classical low-dimensional equivalence between the doubly nonnegative and completely positive cones.

\begin{theorem}[Diananda~\cite{Diananda}; Berman and Shaked-Monderer~\cite{BermanShaked}]\label{thm:cpdnn}
For every $r\le4$,
\[
\DNN^r=\CP^r.
\]
Equivalently, every $r\times r$ matrix that is positive semidefinite and entrywise nonnegative can be decomposed, for some positive integer $N$, as
\[
X=\sum_{s=1}^N y_sy_s^\top,\qquad y_s\in\R_+^r.
\]
\end{theorem}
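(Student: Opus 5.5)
The inclusion $\CP^r\subseteq\DNN^r$ holds in every dimension: each $y_sy_s^\top$ with $y_s\in\R_+^r$ is positive semidefinite and entrywise nonnegative, and both properties survive nonnegative sums. The content of the theorem is the reverse inclusion for $r\le4$. I would prove it by induction on $r$, using the following step. Given $X\in\DNN^r$, subtract a rank-one term $yy^\top$ with $y\in\R_+^r$ so that the remainder $X-yy^\top$ is still doubly nonnegative and either has lower rank or has a new zero entry. I would then iterate until the remainder decomposes into smaller blocks. The base cases $r=1$ and $r=2$ are direct. For $r=2$, a nonnegative positive semidefinite $2\times2$ matrix $\begin{pmatrix}a&b\\ b&c\end{pmatrix}$ with $b\ge0$ and $ac\ge b^2$ equals $\bigl(\sqrt a,\,b/\sqrt a\bigr)^\top\bigl(\sqrt a,\,b/\sqrt a\bigr)+\diag(0,\,c-b^2/a)$, and every term is a nonnegative rank-one matrix. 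The case $a=0$ forces $b=0$ and is trivial.

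For the inductive step I would first use a reduction. If $X$ has a zero diagonal entry, positive semidefiniteness kills that row and column, so $X$ is essentially $(r-1)$-dimensional and the induction hypothesis applies. If the graph of positive off-diagonal entries of $X$ is disconnected, then $X$ is a direct sum of smaller doubly nonnegative blocks, and again the induction hypothesis applies. So I may assume $X$ has a positive diagonal and connected support. Next I would look at the Schur complement along index $1$. The matrix $S:=X-\tfrac1{X_{11}}X_{:,1}X_{:,1}^\top$ is positive semidefinite, and it vanishes in row and column $1$. So $S$ is an $(r-1)$-dimensional positive semidefinite matrix, and the rank-one term $\tfrac1{X_{11}}X_{:,1}X_{:,1}^\top$ is entrywise nonnegative. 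The difficulty is that $S$ need not be entrywise nonnegative. To fix this I would replace the full column by a truncated multiple, namely $t\,vv^\top$ with $v\in\R_+^r$ and $t$ the largest value for which $X-t\,vv^\top$ remains in $\DNN^r$. At this extremal $t$, either an entry becomes zero, which splits the support, or the rank drops. Either outcome lets the induction proceed, together with the earlier reductions.

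Since this greedy step is delicate, I would keep a second, duality-based route in reserve. Both cones are closed, $\CP^r$ is dual to the copositive cone $\mathcal{COP}^r$, and $\DNN^r$ is dual to $\mathcal{PSD}^r+\mathcal N^r$, where $\mathcal N^r$ denotes the symmetric entrywise nonnegative matrices. The latter sum is closed because both summands are closed cones containing no line in common with the negative of the other. It therefore suffices to prove Diananda's decomposition $\mathcal{COP}^r=\mathcal{PSD}^r+\mathcal N^r$ for $r\le4$. I would prove this by normalizing the diagonal of a copositive $A$ to $1$ (after first handling zero diagonals, which force nonnegative rows) and then inducting on the number of negative off-diagonal entries. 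At each stage I would decrease a positive off-diagonal entry of $A$, moving mass to the nonnegative part, while preserving copositivity until a boundary is hit.

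The main obstacle in either route is $r=4$, the first dimension where the sign pattern of the off-diagonal entries allows cycles. In the dual formulation, when the negative entries of $A$ form a triangle or a $4$-cycle, one has to show that the remaining matrix is positive semidefinite after the allowed reductions. This uses the specific fact that a copositive $3\times3$ principal submatrix with unit diagonal and negative off-diagonal entries $-a,-b,-c$ satisfies $a^2+b^2+c^2+2abc\le1$, which is exactly positive semidefiniteness. I expect that case analysis to carry the whole argument, and it is also precisely where dimension $5$ fails: the Horn matrix is copositive but does not lie in $\mathcal{PSD}^5+\mathcal N^5$.
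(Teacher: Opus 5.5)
The paper gives no proof of this theorem; it cites it as a classical result of Diananda and Berman--Shaked-Monderer. Your outline therefore has to stand on its own, and as written neither route does. Your base cases and the two reductions (zero diagonal, disconnected support) are correct.

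In your first route, the inductive step never uses $r\le4$. The reductions and the extremal-$t$ subtraction make sense in every dimension. If the step were valid, it would also prove $\DNN^5=\CP^5$, contradicting the classical separation the paper itself recalls. The false claim is that the extremal $t$ always returns you to the induction hypothesis. A new zero off-diagonal entry removes one edge but need not disconnect the support. A rank drop does not by itself lower $r$: only the full Schur complement step, with $v=Xe_k$ and $t=1/X_{kk}$, produces a zero row. The step can also stall at $t=0$. For $X-tvv^\top\succeq0$ to hold for some $t>0$ you need $v\in\operatorname{range}X$, so truncating a column is not free when $X$ is singular. With $v=Xe_k$, nonnegativity forces $tX_{ik}X_{jk}\le X_{ij}$, so $t=0$ as soon as two neighbours of $k$ in the support graph are nonadjacent.

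For a concrete case, take the positive definite circulant with first row $(1,\tfrac12,\tfrac1{10},\tfrac12)$. No index has a nonnegative Schur complement. Your step with $v=Xe_1$ stops at $t=0.4$, leaving a positive definite remainder supported on $K_4$ minus an edge. One more productive step, pivoting on index $2$ or $4$, leaves a positive definite matrix supported on a $4$-cycle, where every column pivot gives $t=0$.

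The missing ingredient is genuinely dimension-specific. For $r=3$ it is easy: scale to unit diagonal and set $x=X_{12}$, $y=X_{13}$, $z=X_{23}\in[0,1]$. If pivots $1$ and $2$ both failed, you would get $z<xy\le y<xz\le z$, a contradiction. So some Schur complement is a $2\times2$ DNN matrix. For $r=4$ you need a case analysis on the support graph, including a separate argument for bipartite supports such as the $4$-cycle.

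Your second route is a correct reduction but not a proof. The duality is sound: $\CP^r=(\mathcal{COP}^r)^*$ and $\DNN^r=(\mathcal{PSD}^r+\mathcal N^r)^*$. The sum is closed because a PSD matrix with nonpositive entries has zero diagonal and hence vanishes. So $\DNN^r=\CP^r$ is equivalent to Diananda's decomposition $\mathcal{COP}^r=\mathcal{PSD}^r+\mathcal N^r$. But all the content now sits in that decomposition, and your sketch stops exactly where the work begins.

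Decreasing a positive off-diagonal entry never reduces the number of negative entries, so your announced induction has no decreasing parameter. Moreover, ``until a boundary is hit'' is precisely where information must be extracted. The standard way uses nonnegative zeros $u$ of $A$, meaning $u\ge0$ and $u^\top Au=0$. Such zeros satisfy $Au\ge0$, $(Au)_k=0$ on $\supp u$, and $A[\supp u]\succeq0$. One then runs a case analysis over sign patterns on four indices to show that a copositive matrix from which no nonzero nonnegative part can be split off is PSD.

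Your $3\times3$ identity covers only principal blocks that are Z-matrices, where copositivity and PSD-ness coincide. It does not touch the mixed-sign $4\times4$ configurations, which you explicitly defer. So both routes leave the dimension-specific core of the theorem unproved; citing the classical result, as the paper does, is the appropriate treatment here.
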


Fix a nonempty subset $C\subseteq[n]$ with $|C|\le4$. Write $r:=|C|$ and $A:=Q[C]$.

\subsection{Local exactness}
For the local matrix $A$, the quantities $\mu(A)$ and $\vartheta(A)$ are the StQP and DNN values defined in Section~\ref{sec:model}.

\begin{lemma}[Local DNN exactness on small blocks]\label{lem:local-exact}
If $|C|\le4$, then
\[
\vartheta(A)=\mu(A),
\]
and the local DNN problem admits at least one rank-one optimizer $xx^\top$ with $x\in\Delta^{r-1}$ minimizing the local StQP.
\end{lemma}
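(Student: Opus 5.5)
The proof has two halves: the inequality $\vartheta(A)\le\mu(A)$, which holds in every dimension, and the reverse inequality, which uses Theorem~\ref{thm:cpdnn}. The rank-one optimizer then falls out of the argument for the second half.

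\emph{Easy direction.} Since $\Delta^{r-1}$ is compact and $x\mapsto x^\top Ax$ is continuous, the local StQP attains its minimum at some $x^\star\in\Delta^{r-1}$. The matrix $x^\star x^{\star\top}$ is positive semidefinite and entrywise nonnegative. It also satisfies $\langle E,x^\star x^{\star\top}\rangle=(\1^\top x^\star)^2=1$. So it is feasible for the local DNN problem with objective value $x^{\star\top}Ax^\star=\mu(A)$, which gives $\vartheta(A)\le\mu(A)$.

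\emph{Reverse direction.} Let $Y$ be any feasible matrix for $\vartheta(A)$. Since $r\le4$, Theorem~\ref{thm:cpdnn} gives a decomposition $Y=\sum_{s=1}^N y_sy_s^\top$ with $y_s\in\R_+^r$; we drop any $y_s=0$. Set $\lambda_s:=(\1^\top y_s)^2>0$. Then
\[
\sum_s\lambda_s=\sum_s\langle E,y_sy_s^\top\rangle=\langle E,Y\rangle=1 .
\]
Put $x_s:=y_s/(\1^\top y_s)\in\Delta^{r-1}$. This yields
\[
\langle A,Y\rangle=\sum_s\lambda_s\,x_s^\top Ax_s\ge\sum_s\lambda_s\,\mu(A)=\mu(A),
\]
a convex combination of StQP objective values. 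Minimizing over $Y$ gives $\vartheta(A)\ge\mu(A)$, so the two values are equal.

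\emph{Rank-one optimizer.} Combining the two directions, the feasible matrix $x^\star x^{\star\top}$ from the easy direction attains $\vartheta(A)$. It is therefore a rank-one DNN optimizer whose factor $x^\star$ minimizes the local StQP.

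The main obstacle is the reverse direction, since it is the only place that needs a nonnegative factorization. That step is supplied entirely by Theorem~\ref{thm:cpdnn}, so what remains is routine bookkeeping. Existence of the DNN minimum is not needed to state the value identity, although the feasible set is compact (it is closed, and trace plus nonnegativity with $\langle E,Y\rangle=1$ make it bounded).
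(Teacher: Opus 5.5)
Your proof is correct and follows the paper's argument essentially step for step. The rank-one lifting $x^\star x^{\star\top}$ of a StQP minimizer gives $\vartheta(A)\le\mu(A)$, and the completely positive factorization from Theorem~\ref{thm:cpdnn}, normalized to simplex vectors with weights $(\1^\top y_s)^2$ summing to one, gives the reverse inequality.
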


\begin{proof}
Every feasible matrix $Y$ for the local DNN problem lies in $\CP^r$ by Theorem~\ref{thm:cpdnn}, so for some positive integer $N$,
\[
Y=\sum_{s=1}^N y_sy_s^\top,\qquad y_s\in\R_+^r.
\]
By omitting any zero vectors from the decomposition, we may assume $y_s\neq 0$ for all $s$. Let $\alpha_s:=\1^\top y_s>0$ and set $x_s:=y_s/\alpha_s\in\Delta^{r-1}$. Since
\[
\langle E,Y\rangle=\sum_{s=1}^N \alpha_s^2,
\]
feasibility implies $\sum_{s=1}^N \alpha_s^2=1$. Therefore
\[
\langle A,Y\rangle=\sum_{s=1}^N \alpha_s^2 x_s^\top A x_s\ge \mu(A).
\]
Conversely, if $x^\star$ minimizes the local StQP, then $x^\star(x^\star)^\top$ is feasible for the local DNN problem and attains value $\mu(A)$.
\end{proof}

\subsection{Finite support-candidate analysis}
For every nonempty support set $S\subseteq C$, let
\[
\Delta_S:=\{x\in\Delta^{r-1}:\ \supp(x)\subseteq S\},
\]
and let
\[
\mu_S(A):=\min\{x^\top A x:\ x\in\Delta_S\}.
\]
Then
\[
\mu(A)=\min_{\varnothing\neq S\subseteq C}\mu_S(A).
\]

For $|S|=s\ge2$, write $A_S:=A[S]$ and choose once and for all a deterministic matrix $B_S\in\R^{s\times(s-1)}$ whose columns form an orthonormal basis of the tangent space
\[
T_S:=\{u\in\R^s:\ \1^\top u=0\}.
\]
Define the KKT matrix
\[
K_S(A):=
\begin{pmatrix}
A_S & -\1\\
\1^\top & 0
\end{pmatrix}.
\]
On the event $\det K_S(A)\neq0$, let $(x_S(A),\lambda_S(A))$ denote the unique solution of
\[
A_Sx=\lambda\1,\qquad \1^\top x=1.
\]
Here $x_S(A)$ is a vector in the active coordinate space $\R^{|S|}$; when it is used as a vector in $\R^r$, it is embedded by placing its entries on $S$ and zeros on $C\setminus S$. We call $S$ \emph{admissible} if either $|S|=1$, or $|S|\ge2$ and
\[
\det K_S(A)\neq0,\qquad x_S(A)\in\R_{++}^{|S|},\qquad B_S^\top A_SB_S\succ0.
\]
For a singleton $S=\{i\}$, define $x_S(A)=(1)\in\R$ and $\lambda_S(A)=A_{ii}$.

\begin{lemma}[Face values are represented by admissible support candidates]\label{lem:face-cand}
Almost surely, for every nonempty $S\subseteq C$,
\[
\mu_S(A)=\min\{\lambda_T(A):\ \varnothing\neq T\subseteq S,\ T\text{ admissible}\}.
\]
Moreover, whenever $T\subseteq S$ is admissible, the embedded vector $x_T(A)$ is the unique minimizer on the face $\Delta_T$.
\end{lemma}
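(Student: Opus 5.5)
We will prove the two assertions in the reverse order. The second (the uniqueness claim) is deterministic given admissibility, and it is the tool for the first.

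\emph{Step 1: an admissible $T$ has a unique minimizer on $\Delta_T$ with value $\lambda_T(A)$.} For $|T|=1$ this is trivial. For $|T|\ge2$, write any $x\in\Delta_T$ as $x=x_T+B_Tv$, using that $x-x_T\in T_T$. From $A_Tx_T=\lambda_T\1$ and $\1^\top B_T=0$ we get
\[
x^\top A_Tx=x_T^\top A_Tx_T+2\lambda_T\1^\top B_Tv+v^\top B_T^\top A_TB_Tv=\lambda_T+v^\top B_T^\top A_TB_Tv .
\]
Here $x_T^\top A_Tx_T=\lambda_T\1^\top x_T=\lambda_T$. Since $B_T^\top A_TB_T\succ0$, the quadratic form on the affine hull is strictly convex and is minimized only at $v=0$. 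The point $x_T$ lies in $\Delta_T$ because $x_T\in\R^{|T|}_{++}$. Hence $x_T$ is the unique minimizer on $\Delta_T$ and $\mu_T(A)=\lambda_T(A)$. This proves the ``moreover'' part. It also gives $\lambda_T(A)\ge\mu_S(A)$ for every admissible $T\subseteq S$, which is the inequality ``$\le$'' between $\mu_S$ and the candidate minimum.

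\emph{Step 2: the reverse inequality, $\mu_S(A)\ge\min\{\lambda_T(A)\}$ over admissible $T\subseteq S$.} Take a minimizer $x^\ast$ of $x^\top Ax$ on the compact set $\Delta_S$, and let $T:=\supp(x^\ast)$. If $|T|=1$, say $T=\{i\}$, then $\mu_S=A_{ii}=\lambda_T$ and we are done. Otherwise $x^\ast$ lies in the relative interior of $\Delta_T$ and minimizes there, so it satisfies two conditions:
\begin{itemize}[label={}]
\item first-order stationarity on the affine hull, $A_Tx^\ast_T=\lambda\1$ with $\lambda=\mu_S$;
\item second-order necessity, $B_T^\top A_TB_T\succeq0$.
\end{itemize}
To upgrade these to admissibility we need, almost surely, $\det K_T(A)\neq0$ and $B_T^\top A_TB_T\succ0$. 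The first condition then forces $x^\ast_T=x_T(A)$, which is positive, so $T$ is admissible with $\lambda_T(A)=\mu_S(A)$.

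\emph{Step 3: the genericity, which is the main obstacle.} We need two facts.

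First, $\det K_T(A)$ is a nonzero polynomial in the independent entries of $A$. It is nonzero, for instance, at $A_T=I$, since $K_T(I)$ is invertible. The entries of $A=Q[C]$ have a jointly absolutely continuous law: the diagonal and the distinct upper-triangular entries are independent and each has a density. The zero set of a nonzero polynomial has Lebesgue measure zero, so $\det K_T(A)\neq0$ almost surely.

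Second, we must exclude a singular but positive semidefinite reduced Hessian. Note that $\det(B_T^\top A_TB_T)$ is again a nonzero polynomial; at $A_T=I$ it equals $\det(I_{s-1})=1$. So $B_T^\top A_TB_T$ is nonsingular almost surely, and hence $\succeq0$ upgrades to $\succ0$.

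There are only finitely many subsets of $C$, so all of these almost-sure statements hold simultaneously. Only the determinant conditions are needed almost surely; positivity of $x_T$ comes for free from the minimizer.

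The subtle point is that absolute continuity must be invoked for the \emph{joint} law of the entries of $Q[C]$ as a vector in $\R^{r(r+1)/2}$; independence (A1) together with (A2) gives exactly this. One also has to remember that $A$ depends on $n$ only through which entries are selected, which is harmless.
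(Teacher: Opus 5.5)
Your proof is correct and follows essentially the same route as the paper: the same expansion $x^\top A_Tx=\lambda_T+v^\top B_T^\top A_TB_Tv$ gives the uniqueness part, and the first- and second-order conditions at a minimizer on its support face, together with almost-sure nonsingularity of $B_T^\top A_TB_T$ (checked at $A=I$), give the reverse inequality. The only difference is that you prove $\det K_T(A)\neq0$ by a separate genericity argument, whereas the paper derives it deterministically from $B_T^\top A_TB_T\succ0$ (if $K_T(A)(u,\eta)^\top=0$ then $u\in T_T$ and $u^\top A_Tu=\eta\1^\top u=0$, forcing $u=0$ and then $\eta=0$), so your extra step is harmless but redundant.
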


\begin{proof}
Let $\Omega_0$ be the event that for every nonempty $T\subseteq C$ with $|T|\ge2$,
\[
\det(B_T^\top A_T B_T)\neq0.
\]
For each fixed $T$, the determinant is a polynomial in the entries of $A$ and is not identically zero because for $A=I$ one has $B_T^\top A_T B_T=I_{|T|-1}$. Since there are finitely many $T$, we have $\Prob(\Omega_0)=1$.

Fix $S\subseteq C$ and work on $\Omega_0$. Let $x^\star\in\Delta_S$ minimize $x^\top A x$ over $\Delta_S$, and let $T:=\supp(x^\star)\subseteq S$. Then $x^\star$ lies in the relative interior of $\Delta_T$ and minimizes the quadratic form on that face. Identifying $x^\star$ with its restriction to the active coordinates $T$, the KKT conditions on $\Delta_T$ give
\[
A_Tx^\star=\lambda\1,\qquad \1^\top x^\star=1
\]
for some $\lambda\in\R$. The second-order necessary condition on the face yields
\[
u^\top A_Tu\ge0\qquad (u\in T_T).
\]
The quadratic form is positive semidefinite on $T_T$, and since $\Omega_0$ excludes singularity of $B_T^\top A_T B_T$, it is strictly positive definite on $T_T$. It follows that $K_T(A)$ is invertible: if $K_T(A)\begin{pmatrix}u\\ \eta\end{pmatrix}=0$, then $u\in T_T$ and
\[
u^\top A_Tu = \eta\,\1^\top u =0,
\]
so $u=0$ and then $\eta=0$. Thus $T$ is admissible and $(x^\star,\lambda)=(x_T(A),\lambda_T(A))$.

Conversely, if $T$ is admissible and $z=x_T(A)+u$ with $u\in T_T$, then
\[
z^\top A_T z = x_T(A)^\top A_T x_T(A) + u^\top A_Tu = \lambda_T(A)+u^\top A_Tu\ge \lambda_T(A),
\]
with equality only when $u=0$. Therefore $x_T(A)$ is the unique minimizer on $\Delta_T$. Since every minimizer on $\Delta_S$ arises from some support $T\subseteq S$, the claimed representation follows.
\end{proof}

\begin{lemma}[Distinct admissible candidates do not tie]\label{lem:cand-notie}
Fix distinct nonempty sets $S,T\subseteq C$. Then
\[
\Prob\bigl(\lambda_S(A)=\lambda_T(A),\ S\text{ admissible},\ T\text{ admissible}\bigr)=0.
\]
\end{lemma}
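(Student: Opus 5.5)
We need $\Prob(\lambda_S(A)=\lambda_T(A)$, both admissible$)=0$ for distinct $S,T$. Here $A=Q[C]$ with $C$ a fixed subset of size at most four. Its diagonal entries are independent with law $F_D$ and its off-diagonal entries are independent with law $F_O$. Both laws are absolutely continuous, so the joint law of the $r(r+1)/2$ free entries of $A$ is absolutely continuous on $\R^{r(r+1)/2}$. It therefore suffices to show that the tie set is contained in the zero set of a polynomial in these entries that is not identically zero. Such a set has Lebesgue measure zero.

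\textbf{Step 1: rational formula for $\lambda_S$.} On $\{\det K_S(A)\neq0\}$, solving $A_Sx=\lambda\1$, $\1^\top x=1$ by Cramer's rule gives
\[
\lambda_S(A)=\frac{\det A_S}{\det K_S(A)}.
\]
This is consistent with the identity $\det K_S(A)=\1^\top\operatorname{adj}(A_S)\1$ for the bordered matrix. For a singleton $S=\{i\}$ we simply have $\lambda_S=A_{ii}$, which fits the same formula if we set $\det K_{\{i\}}=1$. On the admissible event, the tie $\lambda_S=\lambda_T$ then implies
\[
P_{S,T}(A):=\det A_S\,\det K_T(A)-\det A_T\,\det K_S(A)=0 .
\]
Here $P_{S,T}$ is a polynomial in the entries of $A$.

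\textbf{Step 2: $P_{S,T}\not\equiv0$.} It is enough to exhibit one symmetric matrix $A$ with $P_{S,T}(A)\neq0$. Take $A=\diag(d_i)_{i\in C}$ with all $d_i>0$. Then every $S$ has $x_S\propto(1/d_i)_{i\in S}$ and
\[
\lambda_S=\Bigl(\sum_{i\in S}1/d_i\Bigr)^{-1},
\]
and $\det K_S=\prod_{i\in S}d_i\sum_{i\in S}1/d_i\neq0$. So $P_{S,T}(A)\neq0$ is equivalent to
\[
\sum_{i\in S}1/d_i\neq\sum_{i\in T}1/d_i .
\]
Since $S\neq T$, the symmetric difference is nonempty. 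Choose $k\in S\setminus T$ (the case $k\in T\setminus S$ is symmetric), set all other $d_i=1$, and choose $d_k$ generic. The two sums then differ: one is $|S|-1+1/d_k$ and the other is $|T|$, or $|T|-1+1/d_k$ if $k\in T$, which is excluded by the choice of $k$. Taking $1/d_k\notin\{|T|-|S|+1\}$ therefore suffices.

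\textbf{Step 3: conclusion.} The polynomial $P_{S,T}$ is nonzero, so its zero set is Lebesgue-null, and absolute continuity of the entry law gives probability zero. The event in the statement is contained in this zero set, because admissibility ensures both determinants are nonzero and so the formula from Step 1 is valid.

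The main point needing care is Step 2. We must confirm that $P_{S,T}$ is not identically zero for every pair of distinct $S,T$, including nested pairs and pairs involving singletons. The diagonal test matrix handles all of these uniformly. We should also make sure that using only diagonal perturbations is legitimate; it is, since it suffices to exhibit one point where the polynomial is nonzero.
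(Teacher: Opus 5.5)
Your proof is correct and follows essentially the same route as the paper. You clear the KKT determinants so that a tie lies in the zero set of a polynomial, and then show that this polynomial is nonzero at a positive diagonal matrix, where $\lambda_U=(\sum_{i\in U}1/d_i)^{-1}$. The differences are minor: the paper uses the single matrix $\diag(2,4,\dots,2^{|C|})$, whose dyadic sums separate all subsets at once, whereas you choose a diagonal point depending on the pair. Your explicit Cramer formula $\lambda_S=\det A_S/\det K_S(A)$, with $\det K_{\{i\}}=1$ automatically, also lets you treat the singleton case uniformly instead of separately.
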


\begin{proof}
If both supports are singletons, the equality event is a diagonal tie and has probability zero by continuity. Assume at least one support has size at least two. On the open set where the relevant KKT matrices are invertible, $\lambda_S$ and $\lambda_T$ are rational functions of the finitely many entries of $A$. Multiplying by the nonzero determinant denominators shows that the equality event is contained in the zero set of a polynomial. It is enough to prove that this polynomial is not identically zero.

Take the diagonal matrix
\[
D_C:=\diag(2,4,\dots,2^{|C|})
\]
on the ordered coordinates of $C$. For every nonempty $U\subseteq C$, the KKT system on $U$ is invertible and
\[
\lambda_U(D_C)=\bigl(\1^\top D_C[U]^{-1}\1\bigr)^{-1}
=\Bigl(\sum_{i\in U}2^{-p_C(i)}\Bigr)^{-1},
\]
where $p_C(i)\in\{1,\dots,|C|\}$ is the position of $i$ inside the ordered set $C$. $D_C[U]^{-1}$ is diagonal with entries $2^{-p_C(i)}$, so $\1^\top D_C[U]^{-1}\1$ is exactly the sum of those diagonal entries. These numbers are distinct for distinct $U$ because distinct subsets of dyadic fractions have distinct sums. For $|U|\ge2$, also $x_U(D_C)\in\R_{++}^{|U|}$ and $B_U^\top D_C[U]B_U\succ0$; singleton supports are admissible by definition. Thus every $U$ is admissible at $D_C$. Therefore $\lambda_S(D_C)\neq\lambda_T(D_C)$, and the resulting polynomial is nonzero. Since the law of $A$ is absolutely continuous, the zero set has probability zero.
\end{proof}

\begin{proposition}[Almost-sure uniqueness of the local StQP]\label{prop:local-stqp-unique}
For every fixed $C\subseteq[n]$ with $|C|\le4$, the local StQP
\[
\min\{x^\top Q[C]x:\ x\in\Delta^{|C|-1}\}
\]
has a unique minimizer almost surely.
\end{proposition}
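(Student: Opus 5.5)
We combine Lemma~\ref{lem:face-cand} (applied with $S=C$) and Lemma~\ref{lem:cand-notie}. Write $A=Q[C]$. Work on the intersection $\Omega_1$ of the following events:
\begin{itemize}
\item[] the event $\Omega_0$ from the proof of Lemma~\ref{lem:face-cand};
\item[] for each of the finitely many ordered pairs of distinct nonempty $S,T\subseteq C$, the complement of the null event $\{\lambda_S(A)=\lambda_T(A),\ S,T\text{ admissible}\}$.
\end{itemize}
Since $C$ has at most $2^4-1=15$ nonempty subsets, this is a finite intersection of probability-one events, so $\Prob(\Omega_1)=1$. Here we use that $A=Q[C]$ has an absolutely continuous joint law, by (A1)--(A2): its diagonal and upper-triangular entries are independent with absolutely continuous marginals. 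This is exactly the hypothesis used in Lemmas~\ref{lem:face-cand} and~\ref{lem:cand-notie}.

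On $\Omega_1$, let $x^\star$ and $\tilde x$ both minimize $x^\top Ax$ over $\Delta^{r-1}$, and set $T:=\supp(x^\star)$ and $\tilde T:=\supp(\tilde x)$. As in the proof of Lemma~\ref{lem:face-cand}, $x^\star$ minimizes over $\Delta_T$ and lies in its relative interior. On $\Omega_0$ this forces $T$ to be admissible and $x^\star=x_T(A)$, with value $\lambda_T(A)=\mu(A)$. The same argument makes $\tilde T$ admissible with $\tilde x=x_{\tilde T}(A)$ and $\lambda_{\tilde T}(A)=\mu(A)$.

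If $T\neq\tilde T$, then $\lambda_T(A)=\lambda_{\tilde T}(A)$ with both supports admissible, which is excluded on $\Omega_1$. Hence $T=\tilde T$. By the uniqueness part of Lemma~\ref{lem:face-cand}, $x_T(A)$ is the unique minimizer on $\Delta_T$, so $x^\star=\tilde x$.

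The main point to check is the identity $\lambda_T(A)=x_T(A)^\top A_Tx_T(A)$, which justifies identifying the minimal value with $\lambda_T$. It follows directly from $A_Tx=\lambda\1$ and $\1^\top x=1$. For singletons it holds by definition, since $\lambda_{\{i\}}=A_{ii}$.

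A second point is that Lemma~\ref{lem:cand-notie} is stated for the random block $A$ with the pair $S,T$ fixed in advance, whereas the supports $T,\tilde T$ above are random. Taking the finite union over all pairs resolves this, and that is why the pairwise null events are included in $\Omega_1$ from the start rather than invoked after the supports are known. No further obstacle is expected.
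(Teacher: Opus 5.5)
Your proposal is correct and follows essentially the same route as the paper. You use Lemma~\ref{lem:face-cand} with $S=C$ to show that every minimizer equals an admissible candidate $x_T(A)$ with $\lambda_T(A)=\mu(A)$, take a finite union bound over the null tie events of Lemma~\ref{lem:cand-notie}, and finish with the face-uniqueness part of Lemma~\ref{lem:face-cand}; your explicit comparison of two minimizers simply spells out what the paper compresses into ``there is a unique admissible support $T_\star$ that attains the minimum.''
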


\begin{proof}
By Lemma~\ref{lem:face-cand} with $S=C$, almost surely the block value is the minimum of finitely many admissible candidate values $\lambda_T(A)$. By Lemma~\ref{lem:cand-notie}, two distinct admissible candidates do not tie almost surely. Hence almost surely there is a unique admissible support $T_\star$ that attains the minimum. Lemma~\ref{lem:face-cand} then shows that the embedded vector $x_{T_\star}(A)$ is the unique minimizer.
\end{proof}

A real-valued random variable $Z$ is called \emph{atomless} if $\Prob(Z=t)=0$ for every $t\in\R$.

\begin{lemma}[The local StQP value is atomless]\label{lem:atomless}
For every fixed $C\subseteq[n]$ with $|C|\le4$, the random variable $\mu(Q[C])$ is atomless.
\end{lemma}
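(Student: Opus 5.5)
We need to show that $\Prob(\mu(Q[C])=t)=0$ for every fixed $t\in\R$. Write $A:=Q[C]$. The plan is to use the finite candidate representation from Lemma~\ref{lem:face-cand} with $S=C$. Almost surely,
\[
\mu(A)=\min\{\lambda_T(A):\ \varnothing\ne T\subseteq C,\ T\text{ admissible}\}.
\]
Hence
\[
\{\mu(A)=t\}\subseteq\bigcup_{\varnothing\ne T\subseteq C}\bigl\{T\text{ admissible},\ \lambda_T(A)=t\bigr\}
\]
up to a null set. Since there are finitely many $T$, it suffices to show that each event on the right has probability zero.

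\emph{Singleton $T=\{i\}$.} Here $\lambda_T(A)=A_{ii}=Q_{ii}$, whose law $F_D$ is absolutely continuous. So $\Prob(Q_{ii}=t)=0$.

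\emph{$|T|\ge2$.} On the event $\det K_T(A)\ne0$, Cramer's rule gives $\lambda_T(A)=p_T(A)/\det K_T(A)$ with $p_T$ a polynomial in the entries of $A$. The event is then contained in the zero set of the polynomial
\[
P_{T,t}(A):=p_T(A)-t\det K_T(A)
\]
in the independent entries $(A_{ij})_{i\le j}$. This vector has a joint density because the entries are independent with absolutely continuous laws. The zero set of a nonzero polynomial has Lebesgue measure zero, so it is enough to show $P_{T,t}\not\equiv0$.

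The main obstacle is this nonvanishing, since it must hold for every $t$. I would test on diagonal matrices. Take $D=\diag(d_1,\dots,d_{|C|})$ with all $d_i>0$. As in the proof of Lemma~\ref{lem:cand-notie}, the KKT matrix is invertible and $\lambda_T(D)=(\sum_{i\in T}d_i^{-1})^{-1}$. Rescaling $D\mapsto cD$ with $c>0$ multiplies $\lambda_T$ by $c$, so $\lambda_T(cD)$ is nonconstant in $c$. Therefore, for any $t$, there is some $c$ with $\lambda_T(cD)\ne t$ and $\det K_T(cD)\ne0$. At such a point $P_{T,t}(cD)\ne0$, so $P_{T,t}$ is not identically zero. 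This gives probability zero for every $T$, and a finite union bound completes the proof.
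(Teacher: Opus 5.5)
Your proof is correct and follows the paper's argument essentially step for step. You reduce to level sets of the finitely many admissible candidates $\lambda_T$, handle singletons by absolute continuity, clear the KKT determinant to get a polynomial, and show that polynomial is nonzero by rescaling a positive diagonal test matrix; the paper uses $cI$, for which $\lambda_T=c/|T|$. The only minor difference is that you invoke Lemma~\ref{lem:face-cand} directly rather than Proposition~\ref{prop:local-stqp-unique}, which is slightly more economical because uniqueness of the minimizer is not needed here.
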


\begin{proof}
Fix $t\in\R$. By Proposition~\ref{prop:local-stqp-unique}, almost surely the unique minimizing support is some admissible $T\subseteq C$. Therefore
\[
\{\mu(Q[C])=t\}
\subseteq
\bigcup_{\varnothing\neq T\subseteq C}\{\lambda_T(Q[C])=t,\ T\text{ admissible}\}
\cup \mathcal N,
\]
where $\mathcal N$ is the finite union of the zero-probability exceptional events excluded in Lemma~\ref{lem:face-cand} and Proposition~\ref{prop:local-stqp-unique}; hence $\Prob(\mathcal N)=0$.

For a singleton $T$, $\lambda_T(Q[C])$ is absolutely continuous and therefore atomless. For $|T|\ge2$, on the open set where $K_T$ is invertible, multiplying the identity $\lambda_T=t$ by the nonzero determinant denominator gives a polynomial equation in the entries of $Q[C]$. This polynomial is not identically zero because $\lambda_T$ is nonconstant: if $Q[C]=cI$ with $c>0$, then $T$ is admissible and the KKT system yields the equally weighted vector on $T$ (that is, the vector with all active coordinates equal to $1/|T|$), hence $\lambda_T(Q[C])=c/|T|$. Choosing $c=|T|(|t|+1)$ gives a point where $\lambda_T\neq t$. Thus each level set has probability zero, and the union is finite.
\end{proof}

\begin{corollary}[Almost-sure uniqueness of the local DNN optimizer]\label{cor:local-dnn-unique}
For every fixed $C\subseteq[n]$ with $|C|\le4$, the local DNN problem on $Q[C]$ has a unique optimizer almost surely, and that optimizer is rank one.
\end{corollary}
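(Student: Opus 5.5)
We work on the almost-sure event from Proposition~\ref{prop:local-stqp-unique}, on which the local StQP on $A=Q[C]$ has a unique minimizer $x^\star$. Let $Y$ be any optimizer of the local DNN problem. The plan is to show $Y=x^\star(x^\star)^\top$, reusing the completely positive decomposition from the proof of Lemma~\ref{lem:local-exact}.

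\textbf{Step 1: decompose $Y$.} By Theorem~\ref{thm:cpdnn}, since $|C|\le4$, we can write
\[
Y=\sum_{s=1}^N y_sy_s^\top,\qquad y_s\in\R_+^r\setminus\{0\}.
\]
Set $\alpha_s:=\1^\top y_s>0$ and $x_s:=y_s/\alpha_s\in\Delta^{r-1}$. As in Lemma~\ref{lem:local-exact}, feasibility gives $\sum_s\alpha_s^2=1$.

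\textbf{Step 2: each $x_s$ is a minimizer.} Optimality of $Y$ and Lemma~\ref{lem:local-exact} give
\[
\mu(A)=\vartheta(A)=\langle A,Y\rangle=\sum_s\alpha_s^2\,x_s^\top Ax_s .
\]
Each term satisfies $x_s^\top Ax_s\ge\mu(A)$, and the weights $\alpha_s^2$ are strictly positive and sum to one. Hence $x_s^\top Ax_s=\mu(A)$ for every $s$, so each $x_s$ minimizes the local StQP.

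\textbf{Step 3: conclude uniqueness and rank one.} By uniqueness of the StQP minimizer, $x_s=x^\star$ for all $s$. Therefore
\[
Y=\sum_s\alpha_s^2\,x^\star(x^\star)^\top=x^\star(x^\star)^\top .
\]
So every DNN optimizer equals this single rank-one matrix, which proves both uniqueness and rank one.

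The step needing the most care is the equality argument in Step 2: it requires all $\alpha_s>0$, which is why zero vectors are discarded. It also relies on the decomposition being valid for an arbitrary feasible $Y$, not only for the optimizer produced in Lemma~\ref{lem:local-exact}. Both points are already provided by Theorem~\ref{thm:cpdnn}. The exceptional null set is just the one from Proposition~\ref{prop:local-stqp-unique}, so the conclusion holds almost surely.
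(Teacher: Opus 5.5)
Your proposal is correct and follows essentially the same route as the paper: take a completely positive decomposition of an arbitrary local DNN optimizer via Theorem~\ref{thm:cpdnn}, use the weighted-average identity from Lemma~\ref{lem:local-exact} to force every normalized factor to be a local StQP minimizer, and invoke Proposition~\ref{prop:local-stqp-unique} to collapse the sum to $x^\star(x^\star)^\top$. You simply make explicit the equality step (strictly positive weights $\alpha_s^2$ summing to one) that the paper leaves implicit.
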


\begin{proof}
By Proposition~\ref{prop:local-stqp-unique}, the local StQP has a unique minimizer $x_C$ almost surely. By Lemma~\ref{lem:local-exact}, the rank-one matrix $x_Cx_C^\top$ is locally DNN-optimal. If $Y$ is any other local DNN optimizer, write it as a completely positive decomposition, for some positive integer $N$,
\[
Y=\sum_{s=1}^N y_sy_s^\top
\]
using Theorem~\ref{thm:cpdnn}. The proof of Lemma~\ref{lem:local-exact} shows that every simplex vector arising from this decomposition must itself minimize the local StQP. By uniqueness, every such vector equals $x_C$, and therefore $Y=x_Cx_C^\top$.
\end{proof}

\begin{lemma}[No ties between different small components]\label{lem:notie}
Fix disjoint nonempty sets $C_1,C_2\subseteq[n]$ with $|C_1|,|C_2|\le4$. Then
\[
\Prob\bigl(\vartheta(Q[C_1])=\vartheta(Q[C_2])\bigr)=0.
\]
Consequently, almost surely no two disjoint subsets of size at most four have the same local DNN value.
\end{lemma}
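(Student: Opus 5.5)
The plan is to exploit independence of the two blocks. Since $C_1$ and $C_2$ are disjoint, the entries of $Q[C_1]$ and those of $Q[C_2]$ are disjoint subfamilies of the independent family in (A1). Hence the random variables $V_1:=\vartheta(Q[C_1])$ and $V_2:=\vartheta(Q[C_2])$ are independent. They are measurable because $\vartheta$ is a continuous (in fact concave) function of the matrix entries, being a minimum of linear functions over a fixed compact feasible set.

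By Lemma~\ref{lem:local-exact}, $V_k=\mu(Q[C_k])$ for $k=1,2$. By Lemma~\ref{lem:atomless}, each $V_k$ is atomless. Conditioning on $V_2$ and using Fubini's theorem with independence gives
\[
\Prob(V_1=V_2)=\int \Prob(V_1=t)\,\mathrm{d}\Prob_{V_2}(t)=0,
\]
since the integrand vanishes for every $t$. No polynomial-nonvanishing argument is needed here, because the two blocks share no entries.

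For the consequence, there are finitely many ordered pairs $(C_1,C_2)$ of disjoint nonempty subsets of $[n]$ of size at most four. The union of the corresponding null events is therefore null, which gives the almost-sure statement.

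The only point requiring care is to make the independence precise. Note that $Q[C_k]$ includes both diagonal and off-diagonal entries indexed within $C_k$, and that no entry $Q_{ij}$ with $i\in C_1$, $j\in C_2$ appears in either block. Both observations are immediate from disjointness and symmetry. Lemma~\ref{lem:atomless} must also be applied to each block separately; it holds for every fixed $C$, so this is routine.
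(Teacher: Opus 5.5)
Your proposal is correct and follows the paper's proof essentially verbatim: identify $\vartheta$ with $\mu$ on each block, use independence of the disjoint entry families together with Lemma~\ref{lem:atomless}, integrate out one block, and finish with a finite union bound over disjoint pairs. Two small differences, neither changing the route: you cite Lemma~\ref{lem:local-exact}, which gives $\vartheta=\mu$ deterministically and is slightly cleaner than the paper's appeal to Corollary~\ref{cor:local-dnn-unique}, and your measurability remark fills in a detail the paper leaves implicit.
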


\begin{proof}
By Corollary~\ref{cor:local-dnn-unique}, almost surely
\[
\vartheta(Q[C_1])=\mu(Q[C_1]),\qquad \vartheta(Q[C_2])=\mu(Q[C_2]).
\]
The random variables $\mu(Q[C_1])$ and $\mu(Q[C_2])$ depend on disjoint entry sets of $Q$, hence are independent. By Lemma~\ref{lem:atomless}, each is atomless. Therefore
\[
\Prob\bigl(\mu(Q[C_1])=\mu(Q[C_2])\mid \mu(Q[C_1])\bigr)=0
\]
almost surely, and taking expectations proves the claim. The final statement follows from a finite union bound over all disjoint pairs $(C_1,C_2)$ with $|C_1|,|C_2|\le4$.
\end{proof}

\section{Proof of the universal theorem}\label{sec:universal}
To prove the universal theorem, we first bound the probability that the defect graph contains large connected components.

\begin{lemma}[Five-tree bound in the universal model]\label{lem:five-tree-univ}
Under Assumption~\ref{ass:generator},
\[
\Prob\bigl(\exists\text{ a connected component of }G_n^-\text{ of size }\ge5\bigr)\to 0.
\]
More precisely,
\[
\Prob\bigl(\exists\text{ a connected component of }G_n^-\text{ of size }\ge5\bigr)
\le \binom{n}{5}5^{3}\,\E\bigl[F_O(m_n)^4\bigr].
\]
\end{lemma}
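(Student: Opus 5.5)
We argue by a first-moment count over labeled five-vertex trees, conditioning on the diagonal. The first step is deterministic. If some connected component $C$ of $G_n^-$ has $|C|\ge5$, then $G_n^-[C]$ is connected and contains a spanning tree. Removing leaves one at a time keeps it a tree, so we can shrink it to a subtree on exactly five vertices $V\subseteq C$. Hence the event in question is contained in
\[
\bigcup_{V\subseteq[n],\,|V|=5}\ \bigcup_{T\text{ tree on }V}\ \{E(T)\subseteq E(G_n^-)\}.
\]
By Cayley's formula there are $5^{5-2}=5^3$ labeled trees on a fixed five-element vertex set, and there are $\binom{n}{5}$ choices of $V$. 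Each such tree has exactly $4$ edges.

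Next we bound the probability that a fixed tree $T$ with edges $\{i_1,j_1\},\dots,\{i_4,j_4\}$ lies in $G_n^-$. The tree edge event is $\{Q_{i_kj_k}<m_n,\ k=1,\dots,4\}$. The quantity $m_n$ is a function of the diagonal entries only. By (A1) the four off-diagonal entries are independent of each other and of the diagonal. Conditioning on $(Q_{ii})_{i=1}^n$ and using (A2), we get
\[
\Prob\bigl(E(T)\subseteq E(G_n^-)\,\big|\,(Q_{ii})_i\bigr)=\prod_{k=1}^4\Prob(Q_{i_kj_k}<m_n\mid m_n)=F_O(m_n)^4 .
\]
The continuity of $F_O$ makes the distinction between strict and nonstrict inequality irrelevant. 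Taking expectations gives $\E[F_O(m_n)^4]$ for each tree.

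A union bound over all $(V,T)$ then yields
\[
\Prob\bigl(\exists\text{ component of size}\ge5\bigr)\le\binom n5 5^3\,\E\bigl[F_O(m_n)^4\bigr].
\]
Since $\binom n5\le n^5/120$, the right-hand side is at most $\tfrac{125}{120}\,n^5\E[F_O(m_n)^4]$, which tends to $0$ by \eqref{eq:tailthreshold}.

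The main subtlety is the dependence created by $m_n$: it is shared by all edge events, so they are not independent unconditionally. Conditioning on the diagonal removes this dependence, and it is why the bound carries $\E[F_O(m_n)^4]$ rather than $(\E F_O(m_n))^4$. The reduction to exactly five-vertex trees, rather than summing over all component sizes, is what keeps the count finite. It is justified by the leaf-pruning observation above.
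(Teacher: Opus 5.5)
Your proof is correct and follows the paper's argument essentially step for step: leaf-pruning a spanning tree down to a five-vertex subtree, conditioning so that each prescribed tree appears with probability $F_O(m_n)^4$, Cayley's count of $5^3$ labeled trees, and a union bound over $\binom n5$ vertex sets. Conditioning on the full diagonal rather than on $m_n$ alone, and using $\binom n5\le n^5/120$ to invoke (A3), are harmless refinements that only make the same steps more explicit.
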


\begin{proof}
Conditioned on $m_n$, the off-diagonal entries remain independent and
\[
\Prob(Q_{ij}<m_n\mid m_n)=F_O(m_n).
\]
Thus $G_n^-\mid m_n$ is an Erd\H{o}s-R\'enyi graph $G\bigl(n,F_O(m_n)\bigr)$.

Any connected graph on at least five vertices contains a tree on exactly five vertices: start from a spanning tree of the component and repeatedly delete leaves until only five vertices remain. Therefore the event of a component of size at least five is contained in the event that $G_n^-$ contains some labeled tree on five vertices.

Fix a five-element set $S\subseteq[n]$. A labeled tree here means an undirected tree on the fixed labeled vertex set $S$. By Cayley's formula \cite{Cayley1889}, the number of labeled trees on $S$ is $5^{5-2}=125$. Conditioned on $m_n$, any prescribed five-vertex tree appears with probability $F_O(m_n)^4$. Therefore
\[
\Prob\bigl(\exists\text{ a five-vertex tree in }G_n^-\mid m_n\bigr)
\le \binom{n}{5}125\,F_O(m_n)^4.
\]
Taking expectations proves the bound, and Assumption~\ref{ass:generator}(A3) yields convergence to zero.
\end{proof}

\begin{proof}[Proof of Theorem~\ref{thm:universal}]
Let $\mathcal E_n$ denote the event that every connected component of $G_n^-$ has size at most $4$. By Lemma~\ref{lem:five-tree-univ}, $\Prob(\mathcal E_n)\to1$.

Work on $\mathcal E_n$. Since the off-diagonal law is absolutely continuous and independent of the diagonals, almost surely $Q_{ij}\neq m_n$ for all $i<j$. Hence every cross-component entry of $M$ is strictly positive.

Each connected component $C_a$ has size at most $4$. By Corollary~\ref{cor:local-dnn-unique} and a finite union bound over all subsets of size at most four, almost surely every such subset has a unique rank-one local DNN optimizer. Since
\[
\vartheta(M[C_a])=\vartheta(Q[C_a])-m_n,
\]
Lemma~\ref{lem:notie} implies that almost surely the local shifted DNN values are pairwise distinct across different components. Thus there is almost surely a unique component attaining the minimum local value, say $C_\star$.

By Proposition~\ref{prop:global-local}, the global shifted DNN value equals the minimum local shifted value. Corollary~\ref{cor:equality}(iii) then shows that every global optimizer is supported entirely on $C_\star$. Since the local DNN optimizer on $C_\star$ is unique and rank one, the global shifted DNN optimizer is unique and rank one as well. Because shifting by $m_nE$ changes the objective by a constant only, the same statement holds for the original DNN problem \eqref{eq:dnnQ}.
\end{proof}

\section{GOE specialization and quantitative bound}\label{sec:goe}
In this section, we specialize to the GOE: the diagonal entries are i.i.d. $N(0,1)$ and the off-diagonal entries are i.i.d. $N(0,1/2)$, mutually independent.

Conditioned on the diagonal minimum $m_n$, the defect graph is an Erd\H{o}s-R\'enyi graph $G(n,q_n)$ with
\[
q_n:=\Phi(\sqrt2\,m_n),
\]
where $\Phi$ denotes the standard normal distribution function.

The following lemma establishes the explicit fourth-moment bound required for Theorem~\ref{thm:goe-main}.

\begin{lemma}[Moments of the GOE defect-edge probability]\label{lem:moments-goe}
For every fixed integer $s\ge1$ there exists $K_s<\infty$ such that
\[
\E[q_n^s]\le K_s\frac{(\ln n)^{s/2}}{n^{2s}}.
\]
In particular, taking $s=4$,
\[
\E[q_n^4]\le K_4\frac{(\ln n)^2}{n^8}.
\]
\end{lemma}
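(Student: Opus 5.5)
We will bound $\E[q_n^s]=\E[\Phi(\sqrt2\,m_n)^s]$ by working directly with the law of the diagonal minimum. Since the diagonal entries are i.i.d.\ $N(0,1)$, we have $\Prob(m_n>t)=(1-\Phi(t))^n=\Phi(-t)^n$. The moment can then be written as a Stieltjes integral:
\[
\E[q_n^s]=\int_{\R}\Phi(\sqrt2\,t)^s\,d\bigl(1-\Phi(-t)^n\bigr).
\]
The intuition is as follows. The minimum concentrates near $-a_n$, where $a_n\approx\sqrt{2\ln n}$. At that point $\Phi(\sqrt2\,m_n)\approx\phi(\sqrt2 a_n)/(\sqrt2 a_n)\asymp e^{-a_n^2}/a_n$. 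Moreover, $\Phi(-a_n)\approx 1/n$ gives $e^{-a_n^2/2}\asymp 1/(n a_n)$. Combining these, $e^{-a_n^2}\asymp 1/(n^2a_n^2)$, so $q_n\asymp 1/(n^2 a_n^3)$, which is even smaller than the claimed $(\ln n)^{1/2}/n^2$. The claimed bound is therefore somewhat loose. The real issue is controlling the lower tail of $m_n$: there $q_n$ is larger, but the event is rare.

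\textbf{Splitting the expectation.} Fix a threshold $t_n:=-\sqrt{2\ln n+c\ln\ln n}$, or more simply $t_n=-a_n-\delta_n$ for a suitable $\delta_n$. Split the expectation according to whether $m_n\ge t_n$ or $m_n<t_n$.

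On $\{m_n\ge t_n\}$, $\Phi$ is monotone, so $q_n^s\le\Phi(\sqrt2\,t_n)^s$. The Mills-ratio bound $\Phi(-x)\le\phi(x)/x$ then gives
\[
\Phi(\sqrt2\,t_n)\le \frac{e^{-t_n^2}}{2\sqrt\pi\,|t_n|}.
\]

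On $\{m_n<t_n\}$, the union bound gives $\Prob(m_n<t)\le n\Phi(t)$ for all $t\le t_n$. It also gives the density bound $f_{m_n}(t)\le n\phi(t)$. Hence
\[
\E[q_n^s;\,m_n<t_n]\le n\int_{-\infty}^{t_n}\Phi(\sqrt2\,t)^s\phi(t)\,dt.
\]
Using $\Phi(\sqrt2 t)\le e^{-t^2}/(2\sqrt\pi|t|)$ for $t<0$, the integrand is at most a constant times $|t|^{-s}e^{-(s+1/2)t^2}$. The Gaussian tail integral is then bounded by $\lesssim |t_n|^{-s-1}e^{-(s+1/2)t_n^2}$.

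\textbf{Choice of threshold.} Choose $t_n$ with $t_n^2=2\ln n-\ln\ln n$, so that $e^{-t_n^2}=\ln n/n^2$ and $|t_n|\asymp\sqrt{\ln n}$. The first piece is then $\lesssim\bigl(\ln n/(n^2\sqrt{\ln n})\bigr)^s=(\ln n)^{s/2}/n^{2s}$, exactly the target rate. This reproduces the stated bound and explains the $(\ln n)^{s/2}$ factor.

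The second piece is
\[
n\cdot(\ln n)^{-(s+1)/2}\bigl(\ln n/n^2\bigr)^{s+1/2}=(\ln n)^{s/2}\,n^{-2s},
\]
which is also within the target.

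\textbf{Main obstacle.} The delicate step is keeping the powers of $\ln n$ consistent: the Mills-ratio prefactors $1/|t|$ and the choice of $t_n$ must combine correctly in both pieces. The first thing to verify is that the single choice $t_n^2=2\ln n-\ln\ln n$ balances both terms, up to constants depending only on $s$. All Mills-ratio bounds are used only for $|t|\ge1$, which holds for large $n$. Finitely many small $n$ are absorbed into $K_s$, since $q_n\le1$. The case $s=4$ then gives the stated $K_4(\ln n)^2/n^8$.
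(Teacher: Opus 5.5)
\textbf{There is a genuine gap: the monotonicity step runs the wrong way.} Since $\Phi$ is increasing, $q_n=\Phi(\sqrt2\,m_n)$ is increasing in $m_n$. So on $\{m_n\ge t_n\}$ you get $q_n\ge\Phi(\sqrt2\,t_n)$, not $q_n\le\Phi(\sqrt2\,t_n)$.

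Consequently the dangerous region is the \emph{upper} tail of $m_n$, where the diagonal minimum is less negative than typical. For $m_n\ge0$ one even has $q_n\ge1/2$. It is not the lower tail, as your intuition paragraph asserts.

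\begin{itemize}
\item Your estimate $\Phi(\sqrt2\,t_n)^s=O\bigl((\ln n)^{s/2}n^{-2s}\bigr)$ is really the trivial bound on the benign event $\{m_n<t_n\}$.
\item Your second computation is spent on that same benign event.
\item The event $\{m_n\ge t_n\}$ is never controlled. With your choice $t_n^2=2\ln n-\ln\ln n$ it is not rare at all: $\Phi(t_n)\sim(2\sqrt\pi\,n)^{-1}$, so $\Prob(m_n\ge t_n)\to e^{-1/(2\sqrt\pi)}$.
\item Relabeling the two events does not rescue the argument. The density bound $f_{m_n}(t)\le n\varphi(t)$ is useless on the upper region, since $n\int_{t_n}^{\infty}\Phi(\sqrt2\,t)^s\varphi(t)\,dt$ is of order $n$. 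That bound discards exactly the factor $(1-\Phi(t))^{n-1}$ that makes large values of $m_n$ unlikely.
\end{itemize}

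\textbf{The missing idea is to keep that factor.} Use
\[
f_{m_n}(t)=n\varphi(t)(1-\Phi(t))^{n-1}\le n\varphi(t)e^{-(n-1)\Phi(t)},
\]
together with the two-sided Mills comparison $\Phi(\sqrt2\,t)\le4\sqrt\pi\,|t|\,\Phi(t)^2$ for $t\le-1$. The substitution $u=\Phi(t)$ then yields
\[
\E\bigl[q_n^s;\ t_n\le m_n\le-1\bigr]\le(4\sqrt\pi)^s\,|t_n|^s\,n\int_0^1u^{2s}e^{-(n-1)u}\,du=O\bigl((\ln n)^{s/2}n^{-2s}\bigr).
\]
The remaining pieces are easy. $\E[q_n^s;\ m_n>-1]\le\Phi(1)^n$ is exponentially small, and $\{m_n<t_n\}$ is handled by your (correct) bound on $\Phi(\sqrt2\,t_n)$.

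This is essentially the paper's proof. The paper writes $q_n=\Phi(\sqrt2\,\Phi^{-1}(U_{(1)}))\le K_0U_{(1)}^2\sqrt{\ln(1/U_{(1)})}$ and integrates against the density $n(1-u)^{n-1}\le ne^{-(n-1)u}$ of $U_{(1)}=\Phi(m_n)$.

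Separately, your heuristic has an arithmetic slip. $\Phi(-a_n)\approx1/n$ gives $e^{-a_n^2/2}\asymp a_n/n$, not $1/(na_n)$. So the typical size of $q_n$ is $\asymp\sqrt{\ln n}/n^2$, and the stated bound is sharp rather than loose.
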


\begin{proof}
Let $U_i:=\Phi(Q_{ii})$. Then the $U_i$ are i.i.d. uniform on $(0,1)$ and $U_{(1)}:=\min_{1\le i\le n} U_i=\Phi(m_n)$. Equivalently, $U_{(1)}$ has density $n(1-u)^{n-1}$ on $(0,1)$. Moreover,
\[
q_n=\Phi\bigl(\sqrt2\,\Phi^{-1}(U_{(1)})\bigr).
\]
We use the standard Mills-ratio inequalities (see Mills~\cite{Mills1926} and Gordon~\cite{Gordon1941})
\[
\frac{x}{1+x^2}\,\varphi(x)<1-\Phi(x)<\frac{\varphi(x)}{x},\qquad x>0,
\]
where $\varphi$ denotes the standard normal density. Let $u\in(0,1/2)$ and set $x:=-\Phi^{-1}(u)>0$, so that $u=\Phi(-x)$. Then the inequalities above give
\[
u\asymp \frac{\varphi(x)}{x}.
\]
Applying the upper bound again at $\sqrt2\,x$ yields
\[
\Phi(-\sqrt2\,x)\le \frac{\varphi(\sqrt2\,x)}{\sqrt2\,x}
=\frac{1}{2\sqrt\pi\,x}e^{-x^2}.
\]
On the other hand, $u\asymp \varphi(x)/x=(2\pi)^{-1/2}x^{-1}e^{-x^2/2}$, hence
\[
u^2\asymp \frac{1}{2\pi x^2}e^{-x^2}.
\]
Combining the last two displays gives
\[
\Phi(-\sqrt2\,x)\le K_0 x u^2.
\]
Finally, the relation $u\asymp \varphi(x)/x$ implies $x\asymp \sqrt{\ln(1/u)}$ as $u\downarrow 0$, so for all sufficiently small $u>0$,
\[
\Phi\bigl(\sqrt2\,\Phi^{-1}(u)\bigr)=\Phi(-\sqrt2\,x)\le K_0 u^2\sqrt{\ln(1/u)}.
\]
Hence there exists $u_0\in(0,1/2]$ and $K_0<\infty$ such that the displayed bound holds on $(0,u_0]$; enlarging $K_0$ if necessary, the same bound holds on $(0,1/2]$.

Split the expectation at $u=1/2$. On $(0,1/2]$ we obtain
\[
q_n^s\le K_s^{(1)} U_{(1)}^{2s}(\ln(1/U_{(1)}))^{s/2},
\]
where $K_s^{(1)}:=K_0^s$. Enlarging $K_s^{(1)}$ if necessary so that $K_s^{(1)}\ge1$, and using the density of $U_{(1)}$ displayed above, we have
\[
\E[q_n^s]\le K_s^{(1)} n\int_0^{1/2} u^{2s}(\ln(1/u))^{s/2}(1-u)^{n-1}\,du
+ K_s^{(1)} n\int_{1/2}^1 (1-u)^{n-1}\,du.
\]
The second integral contribution is $K_s^{(1)}(1/2)^n$. For the first integral, using $(1-u)^{n-1}\le e^{-(n-1)u}$ and changing variables $v=(n-1)u$ yields a constant $K_s^{(2)}<\infty$, depending only on $s$, such that
\[
\E[q_n^s]\le K_s^{(2)}\left[
 n^{-2s}\int_0^{\infty} v^{2s}\bigl(\ln((n-1)/v)\bigr)_+^{s/2}e^{-v}\,dv
+ (1/2)^n\right].
\]
The integral is $O_s((\ln n)^{s/2})$, while the exponential term is negligible relative to $n^{-2s}(\ln n)^{s/2}$. Therefore, for some $K_s^{(3)}<\infty$,
\[
\E[q_n^s]\le K_s^{(3)}\frac{(\ln n)^{s/2}}{n^{2s}}.
\]
Renaming $K_s^{(3)}$ as $K_s$ proves the claim.
\end{proof}

\begin{theorem}[Quantitative five-tree estimate under GOE]\label{thm:goe-fivetree}
There exists $K<\infty$ such that
\[
\Prob\bigl(\exists\text{ a connected component of }G_n^-\text{ of size }\ge5\bigr)
\le K\frac{(\ln n)^2}{n^3}.
\]
\end{theorem}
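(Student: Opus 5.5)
The plan is to combine the five-tree bound of Lemma~\ref{lem:five-tree-univ} with the moment estimate of Lemma~\ref{lem:moments-goe}. The GOE model satisfies (A1) and (A2). Lemma~\ref{lem:five-tree-univ} only uses conditional independence of the off-diagonal entries given $m_n$, together with Cayley's formula, so it applies verbatim with $F_O(t)=\Phi(\sqrt2\,t)$. This is the distribution function of $N(0,1/2)$, hence $F_O(m_n)=q_n$. The lemma then gives
\[
\Prob\bigl(\exists\text{ a connected component of }G_n^-\text{ of size }\ge5\bigr)
\le \binom{n}{5}\,125\,\E\bigl[q_n^4\bigr].
\]

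Next, Lemma~\ref{lem:moments-goe} with $s=4$ gives $\E[q_n^4]\le K_4(\ln n)^2/n^8$. Combining this with $\binom{n}{5}\le n^5/120$, the right-hand side is at most
\[
\frac{125}{120}K_4\frac{(\ln n)^2}{n^3}.
\]
So the claim holds with $K:=\tfrac{125}{120}K_4$. For small $n$ one can enlarge $K$ if needed, although the inequality above already holds for every $n\ge5$. For $n\le4$ there is no component of size at least five, so the probability is zero.

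There is no real obstacle, since the substantive analytic work is done in Lemma~\ref{lem:moments-goe}. The one point to check is the identification $F_O(m_n)=\Phi(\sqrt2\,m_n)=q_n$ under the $N(0,1/2)$ off-diagonal normalization. Implicitly, one should also note that (A3) holds, since $n^5\E[q_n^4]=O((\ln n)^2/n^3)\to0$. Theorem~\ref{thm:universal}'s on-event argument then yields Theorem~\ref{thm:goe-main} with the same rate.
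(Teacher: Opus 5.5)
Your proposal is correct and matches the paper's proof: you take the bound $\binom{n}{5}125\,\E[F_O(m_n)^4]$ from Lemma~\ref{lem:five-tree-univ} with $F_O(m_n)=\Phi(\sqrt2\,m_n)=q_n$, apply Lemma~\ref{lem:moments-goe} with $s=4$, and absorb $\binom{n}{5}\le n^5/120$ into the constant. Your extra remarks, that the tree bound itself does not use (A3) and how small $n$ is handled, are sound and only make explicit what the paper leaves implicit.
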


\begin{proof}
By Lemma~\ref{lem:five-tree-univ} and Lemma~\ref{lem:moments-goe} with $s=4$,
\[
\begin{aligned}
&\Prob\bigl(\exists\text{ a connected component of }G_n^-\text{ of size }\ge5\bigr)\\
&\quad\le \binom{n}{5}125\,\E[q_n^4]
\le K n^5\cdot \frac{(\ln n)^2}{n^8}
= K\frac{(\ln n)^2}{n^3}.
\end{aligned}
\]
\end{proof}

\begin{proof}[Proof of Theorem~\ref{thm:goe-main}]
Theorem~\ref{thm:goe-fivetree} shows that the event in Theorem~\ref{thm:universal} has probability at least $1-K(\ln n)^2/n^3$. On that event, Theorem~\ref{thm:universal} yields uniqueness and rank-one exactness of the DNN optimizer. The final statement about the original StQP follows exactly as in the last paragraph of the proof of Theorem~\ref{thm:universal}.
\end{proof}

\section{Extensions and open directions}\label{sec:extensions}
The universal theorem isolates the single probabilistic quantity
\[
 n^5\,\E\bigl[F_O(m_n)^4\bigr]
\]
that controls the absence of five-vertex defect trees. We now verify the tail-decay condition (A3) for several broader model classes. A positive function $h$ is regularly varying at infinity with index $\rho$ (see, e.g., de Haan and Ferreira~\cite{deHaanFerreira2006}, Leadbetter et al.~\cite{LeadbetterLindgrenRootzen1983}, or Resnick~\cite{Resnick1987}) if $h(tx)/h(x)\to t^\rho$ for every $t>0$ as $x\to\infty$; at a finite endpoint, regular variation with index $\rho$ means $h(tx)/h(x)\to t^\rho$ for every $t>0$ as $x\downarrow0$. In Propositions~\ref{prop:heavytail} and~\ref{prop:uniformtype}, the hypotheses are the concrete pure-power asymptotics displayed there. For the quantile-based arguments below, $F^{\leftarrow}(u):=\inf\{x:\ F(x)\ge u\}$ denotes the lower quantile function. Theorem~\ref{thm:universal} then implies exactness and uniqueness with probability $1-o(1)$.

\begin{proposition}[Variance-tuned Gaussian Wigner models]\label{prop:vtwigner}
Assume the diagonal entries are i.i.d. $N(0,\gamma^2)$ and the off-diagonal entries are i.i.d. $N(0,\sigma^2)$, mutually independent. If
\[
\sigma^2< \frac45\,\gamma^2,
\]
then Assumption~\ref{ass:generator}(A3) holds.
\end{proposition}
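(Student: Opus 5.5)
We need $n^5\E[F_O(m_n)^4]\to0$, where $F_O(t)=\Phi(t/\sigma)$ and $m_n=\gamma\min_i Z_i$ with $Z_i$ i.i.d. standard normal. Set $c:=\gamma/\sigma$, so the hypothesis reads $c^2>5/4$ and $F_O(m_n)=\Phi(c\,\Phi^{-1}(U_{(1)}))$, where $U_{(1)}=\Phi(\min_i Z_i)$ has density $n(1-u)^{n-1}$. We mirror the proof of Lemma~\ref{lem:moments-goe}, replacing $\sqrt2$ by $c$.

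\textbf{Step 1: a tail comparison.} For $u\in(0,1/2]$ put $x:=-\Phi^{-1}(u)>0$. The Mills-ratio inequalities give $u\asymp\varphi(x)/x$ for $x$ bounded away from $0$, and $\Phi(-cx)\le\varphi(cx)/(cx)$. Since $\varphi(cx)\propto e^{-c^2x^2/2}=(e^{-x^2/2})^{c^2}$ and $e^{-x^2/2}\asymp x u$, we obtain for small $u$
\[
\Phi\bigl(c\,\Phi^{-1}(u)\bigr)\le K\,x^{c^2-1}u^{c^2}\le K' u^{c^2}\bigl(\ln(1/u)\bigr)^{(c^2-1)/2}.
\]
Enlarging the constant extends this to all of $(0,1/2]$; on $(1/2,1)$ we use the trivial bound $F_O\le1$.

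\textbf{Step 2: the moment bound.} Split the expectation at $u=1/2$. The piece over $(1/2,1)$ contributes at most $2^{-(n-1)}$. On $(0,1/2]$, use $(1-u)^{n-1}\le e^{-(n-1)u}$ and substitute $v=(n-1)u$, exactly as in Lemma~\ref{lem:moments-goe}. This gives
\[
\E[F_O(m_n)^4]\le K\,n^{-4c^2}(\ln n)^{2(c^2-1)}+2^{-(n-1)}.
\]
Here the logarithmic factor is handled by $\ln(1/u)=\ln((n-1)/v)\le\ln n+|\ln v|$ together with integrability of $v^{4c^2}|\ln v|^{p}e^{-v}$ on $(0,\infty)$.

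\textbf{Step 3: conclusion.} Multiplying by $n^5$ gives $O\bigl(n^{5-4c^2}(\ln n)^{2(c^2-1)}\bigr)+n^5 2^{-(n-1)}$. Since $4c^2>5$, the polynomial factor beats the logarithm and both terms tend to $0$, which verifies (A3). When $\sigma^2=\gamma^2/2$ (so $c^2=2$) the same computation recovers the GOE rate.

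\textbf{Main obstacle.} The delicate point is making Step 1 uniform in $u$ near $1/2$, where $x$ is not large. Since $c>1$, the inequality $\Phi(-cx)\le\Phi(-x)=u$ holds there, and the ratio $u^{c^2}/u$ is bounded below on any $[u_0,1/2]$. So a compactness argument that enlarges the constant handles this range, just as in the GOE lemma.
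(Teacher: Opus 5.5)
Your proposal is correct and follows essentially the same route as the paper. The steps match: you reparametrize via $F_O(m_n)=\Phi(c\,\Phi^{-1}(U_{(1)}))$, use the Mills-ratio comparison to get the bound $u^{c^2}(\ln(1/u))^{(c^2-1)/2}$, and substitute $v=(n-1)u$ to obtain $n^{5-4c^2}(\ln n)^{2c^2-2}$. The only cosmetic difference is that you split at $u=1/2$ and enlarge the constant, as in the GOE lemma, whereas the paper splits at a small fixed $u_0$ and treats the rest as an exponentially small remainder.
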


\begin{proof}
Set $a:=\gamma/\sigma$. Since the diagonal law is $N(0,\gamma^2)$, the variables $U_i:=\Phi(Q_{ii}/\gamma)$ are i.i.d. uniform on $(0,1)$. Thus
\[
U_{(1)}:=\min_{1\le i\le n} U_i=\Phi(m_n/\gamma),
\]
and $U_{(1)}$ has the same order-statistic distribution as in Lemma~\ref{lem:moments-goe}. Moreover,
\[
F_O(m_n)=\Phi(m_n/\sigma)=\Phi\bigl(a\,\Phi^{-1}(U_{(1)})\bigr).
\]
Let $x=-\Phi^{-1}(u)$; as $u\downarrow0$, $x\to\infty$ and $\Phi\bigl(a\Phi^{-1}(u)\bigr)=\Phi(-a x)$. By Mills' ratio, for all sufficiently large $x$,
\[
u=\Phi(-x)\asymp x^{-1}e^{-x^2/2},\qquad
\Phi(-a x)\le K_a^{(1)} x^{-1}e^{-a^2x^2/2}.
\]
The first relation implies $e^{-x^2/2}\asymp x u$, and therefore
\[
e^{-a^2x^2/2}\le K_a^{(2)} x^{a^2}u^{a^2}.
\]
Substitution into the second inequality yields
\[
\Phi(-a x)\le K_a^{(3)} u^{a^2}x^{a^2-1}.
\]
Since $x\asymp\sqrt{\ln(1/u)}$ as $u\downarrow0$, we obtain
\[
\Phi\bigl(a\Phi^{-1}(u)\bigr)
\le K_a^{(4)} u^{a^2}\{\ln(1/u)\}^{(a^2-1)/2}
\]
for all sufficiently small $u$. Hence,
\[
\Phi\bigl(a\Phi^{-1}(u)\bigr)^4
\le K_a^{(5)} u^{4a^2}\{\ln(1/u)\}^{2a^2-2}.
\]
Since $F_O(m_n)=\Phi(a\Phi^{-1}(U_{(1)}))$ and $U_{(1)}$ has density $n(1-u)^{n-1}$, the expectation can be written as
\[
\E[F_O(m_n)^4]
= n\int_0^1 \Phi(a\Phi^{-1}(u))^4(1-u)^{n-1}\,du.
\]
Splitting this integral at a fixed small $u_0\in(0,1/2]$ and using the preceding bound on $(0,u_0]$, while bounding the contribution from $[u_0,1]$ by an exponentially small term, gives
\[
\E[F_O(m_n)^4]
\le K_a^{(6)} n\int_0^{u_0}u^{4a^2}\{\ln(1/u)\}^{2a^2-2}(1-u)^{n-1}\,du+O(e^{-kn}).
\]
Under the hypothesis $a^2>5/4$, set $\beta:=2a^2-2>0$. Using $(1-u)^{n-1}\le e^{-(n-1)u}$ and the change of variables $v=(n-1)u$, the integral term satisfies
\[
\begin{aligned}
&n\int_0^{u_0}u^{4a^2}\{\ln(1/u)\}^{\beta}(1-u)^{n-1}\,du\\
&\quad\le n(n-1)^{-(4a^2+1)}
\int_0^{(n-1)u_0}v^{4a^2}\bigl\{\ln\bigl((n-1)/v\bigr)\bigr\}^{\beta}e^{-v}\,dv.
\end{aligned}
\]
On this domain, $v\le (n-1)u_0$ and $u_0\le 1/2$, so $(n-1)/v\ge 1/u_0\ge 2$. Thus the logarithm is strictly positive, and we obtain the bound
\[
\bigl\{\ln\bigl((n-1)/v\bigr)\bigr\}^{\beta}
\le K_a^{(7)}(\ln n)^{\beta}(1+|\ln v|^{\beta}).
\]
Hence
\[
\begin{aligned}
&n\int_0^{u_0}u^{4a^2}\{\ln(1/u)\}^{\beta}(1-u)^{n-1}\,du\\
&\quad\le K_a^{(8)} n^{-4a^2}(\ln n)^{\beta}
\int_0^\infty v^{4a^2}(1+|\ln v|^{\beta})e^{-v}\,dv.
\end{aligned}
\]
The remaining integral is finite, so
\[
 n^5\E[F_O(m_n)^4]
\le K_a^{(9)} n^{5-4a^2}(\ln n)^{2a^2-2}.
\]
If $\sigma^2<4\gamma^2/5$, then $a^2>5/4$, so the polynomial factor $n^{5-4a^2}$ tends to zero fast enough to dominate the logarithmic factor. Thus Assumption~\ref{ass:generator}(A3) holds.
\end{proof}

\begin{proposition}[Heavy-tailed ensembles]\label{prop:heavytail}
Assume the diagonal and off-diagonal laws have left tails satisfying
\[
F_D(-x)\sim c_Dx^{-\alpha_D},\qquad F_O(-x)\sim c_Ox^{-\alpha_O},\qquad x\to\infty,
\]
with $c_D,c_O>0$ and $\alpha_D,\alpha_O>0$. If
\[
\alpha_O>\frac54\,\alpha_D,
\]
then Assumption~\ref{ass:generator}(A3) holds.
\end{proposition}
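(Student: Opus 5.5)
We verify (A3) by the same quantile route as in Proposition~\ref{prop:vtwigner}. Write $U_{(1)}:=F_D(m_n)$. By absolute continuity the variables $F_D(Q_{ii})$ are i.i.d.\ uniform, so $U_{(1)}$ has density $n(1-u)^{n-1}$. Moreover $m_n=F_D^{\leftarrow}(U_{(1)})$ almost surely. Hence
\[
\E[F_O(m_n)^4]=n\int_0^1 F_O\bigl(F_D^{\leftarrow}(u)\bigr)^4(1-u)^{n-1}\,du,
\]
and it suffices to show $F_O(F_D^{\leftarrow}(u))\le K u^{\alpha_O/\alpha_D}$ for all sufficiently small $u$.

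\textbf{Quantile inversion.} Since $F_D(-x)\sim c_Dx^{-\alpha_D}$, for $u\downarrow0$ we get $-F_D^{\leftarrow}(u)\sim (c_D/u)^{1/\alpha_D}$. Concretely:
\begin{itemize}
\item Fix $\varepsilon\in(0,1)$.
\item For small $u$, the point $x_u:=((1-\varepsilon)c_D/u)^{1/\alpha_D}$ satisfies $F_D(-x_u)>u$.
\item Therefore $F_D^{\leftarrow}(u)\ge -x_u$.
\end{itemize}
Monotonicity of $F_O$ and the tail asymptotic then give
\[
F_O(F_D^{\leftarrow}(u))\le F_O(-x_u)\le 2c_O x_u^{-\alpha_O}=K u^{\alpha_O/\alpha_D}.
\]
This quantile step is the only place needing care, because $F_D^{\leftarrow}$ is a generalized inverse. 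The one-sided inequality above avoids needing exact asymptotics of the inverse.

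\textbf{Integral estimate.} Fix $u_0$ so that the bound holds on $(0,u_0]$, and set $\kappa:=4\alpha_O/\alpha_D$. On $[u_0,1]$, bound the integrand by $1$; this contributes at most $n(1-u_0)^{n-1}=O(e^{-kn})$. On $(0,u_0]$, use $(1-u)^{n-1}\le e^{-(n-1)u}$ and substitute $v=(n-1)u$:
\[
n\int_0^{u_0}Ku^{\kappa}e^{-(n-1)u}\,du\le K' n^{-\kappa}\int_0^\infty v^{\kappa}e^{-v}\,dv=K''n^{-\kappa}.
\]
Thus $n^5\E[F_O(m_n)^4]\le K''n^{5-\kappa}+O(n^5e^{-kn})$. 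The hypothesis $\alpha_O>\tfrac54\alpha_D$ gives $\kappa>5$, so this bound tends to zero, and (A3) holds.
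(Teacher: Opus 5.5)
Your proof is correct and follows essentially the same route as the paper: the quantile representation of $m_n$ via $U_{(1)}$, a power bound $F_O(F_D^{\leftarrow}(u))\le Ku^{\alpha_O/\alpha_D}$ near $0$, and a split of the order-statistic integral into an exponentially small piece plus a Gamma integral of order $n^{-4\alpha_O/\alpha_D}$; your one-sided choice of $x_u$ is a slightly more careful substitute for the paper's appeal to $-F_D^{\leftarrow}(u)\sim c_D^{1/\alpha_D}u^{-1/\alpha_D}$. One sign slip needs fixing: $F_D(-x_u)>u$ puts $-x_u$ in $\{x:F_D(x)\ge u\}$, so $F_D^{\leftarrow}(u)\le -x_u$ (not $\ge$), and this is exactly the direction that monotonicity of $F_O$ needs to give $F_O(F_D^{\leftarrow}(u))\le F_O(-x_u)$.
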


\begin{proof}
By continuity of $F_D$, the variables $U_i:=F_D(Q_{ii})$ are i.i.d. uniform on $(0,1)$, and $U_{(1)}:=\min_{1\le i\le n} U_i=F_D(m_n)$ has density $n(1-u)^{n-1}$ on $(0,1)$. Equivalently, the minimum has the same distribution as $F_D^{\leftarrow}(U_{(1)})$, and hence
\[
\E[F_O(m_n)^4]
= n\int_0^1 F_O(F_D^{\leftarrow}(u))^4(1-u)^{n-1}\,du.
\]
The asymptotic equivalence $F_D(-x)\sim c_Dx^{-\alpha_D}$ implies, by standard quantile inversion,
\[
-F_D^{\leftarrow}(u)\sim c_D^{1/\alpha_D}u^{-1/\alpha_D}\qquad (u\downarrow0).
\]
Evaluating the off-diagonal tail at this lower quantile gives
\[
F_O\bigl(F_D^{\leftarrow}(u)\bigr)
\sim c_Oc_D^{-\alpha_O/\alpha_D}u^{\alpha_O/\alpha_D}\qquad (u\downarrow0).
\]
Thus, by the definition of asymptotic equivalence, there exist $u_0\in(0,1)$ and $K<\infty$ such that
\[
F_O\bigl(F_D^{\leftarrow}(u)\bigr)^4\le K u^{4\alpha_O/\alpha_D}\qquad (0<u\le u_0).
\]
Let $p:=4\alpha_O/\alpha_D$. In the expectation integral displayed above, the contribution from $u\in(u_0,1)$ is at most exponentially small, because $F_O\le 1$ and
\[
\int_{u_0}^1 n(1-u)^{n-1}\,du=(1-u_0)^n.
\]
For the integral over $(0,u_0]$, using $(1-u)^{n-1}\le e^{-(n-1)u}$ gives
\[
\begin{aligned}
 n\int_0^{u_0}u^p(1-u)^{n-1}\,du
&\le n\int_0^\infty u^p e^{-(n-1)u}\,du\\
&=n(n-1)^{-(p+1)}\Gamma(p+1)\\
&\le K_p n^{-p}.
\end{aligned}
\]
Consequently, $\E[F_O(m_n)^4] \le K_p n^{-p}+O(e^{-kn})=O(n^{-p})$. Since $\alpha_O>5\alpha_D/4$, we have $p>5$, and therefore $n^5\E[F_O(m_n)^4]=O(n^{5-p})\to0$.
\end{proof}

\begin{proposition}[Finite-lower-endpoint and uniform-type laws]\label{prop:uniformtype}
Assume the diagonal and off-diagonal laws have a common finite lower endpoint $-a$ and satisfy
\[
F_D(-a+t)\sim c_D t^{\beta_D},\qquad F_O(-a+t)\sim c_O t^{\beta_O},\qquad t\downarrow0,
\]
with $c_D,c_O>0$ and $\beta_D,\beta_O>0$. If
\[
\beta_O>\frac54\,\beta_D,
\]
then Assumption~\ref{ass:generator}(A3) holds.
\end{proposition}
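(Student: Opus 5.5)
The argument will mirror the proof of Proposition~\ref{prop:heavytail}, with the left tail at $-\infty$ replaced by the right neighbourhood of the common endpoint $-a$. Since $F_D$ is continuous, the variables $U_i:=F_D(Q_{ii})$ are i.i.d.\ uniform, and $U_{(1)}=F_D(m_n)$ has density $n(1-u)^{n-1}$. Moreover $m_n=F_D^{\leftarrow}(U_{(1)})$ almost surely. This gives
\[
\E[F_O(m_n)^4]=n\int_0^1 F_O\bigl(F_D^{\leftarrow}(u)\bigr)^4(1-u)^{n-1}\,du .
\]
The task then reduces to a pure-power bound on the composite function $u\mapsto F_O(F_D^{\leftarrow}(u))$ near $u=0$.

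First I will invert the endpoint asymptotics. From $F_D(-a+t)\sim c_Dt^{\beta_D}$ as $t\downarrow0$, I will show
\[
F_D^{\leftarrow}(u)+a\sim (u/c_D)^{1/\beta_D}\qquad (u\downarrow0).
\]
To do this, fix $\varepsilon>0$. For small $u$, the monotonicity of $F_D$ and the two-sided bounds $(1\pm\varepsilon)c_Dt^{\beta_D}$ sandwich $t_u:=F_D^{\leftarrow}(u)+a$ between $((u/((1+\varepsilon)c_D))^{1/\beta_D}$ and $(u/((1-\varepsilon)c_D))^{1/\beta_D}$. It is also clear that $t_u\downarrow0$ as $u\downarrow0$, because $-a$ is the lower endpoint and the power law forces $F_D>0$ to the right of $-a$.

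Next I substitute this into $F_O(-a+t)\sim c_Ot^{\beta_O}$. This gives
\[
F_O\bigl(F_D^{\leftarrow}(u)\bigr)\sim c_O c_D^{-\beta_O/\beta_D}u^{\beta_O/\beta_D}.
\]
In particular, there are $u_0\in(0,1)$ and $K<\infty$ such that $F_O(F_D^{\leftarrow}(u))^4\le Ku^{p}$ on $(0,u_0]$, where $p:=4\beta_O/\beta_D$.

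Then I will split the integral at $u_0$. On $(u_0,1)$, the bound $F_O\le1$ gives a contribution of at most $(1-u_0)^n$. On $(0,u_0]$, the estimate $(1-u)^{n-1}\le e^{-(n-1)u}$ and the Gamma integral give
\[
n\int_0^{u_0} u^p(1-u)^{n-1}\,du\le n(n-1)^{-(p+1)}\Gamma(p+1)\le K_pn^{-p}.
\]
Hence $n^5\E[F_O(m_n)^4]=O(n^{5-p})+O(n^5(1-u_0)^n)$, and this tends to zero because $\beta_O>\tfrac54\beta_D$ means exactly $p>5$.

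The only step needing care is the quantile inversion, in particular making sure that $F_D^{\leftarrow}(u)\to-a$ from the right so that the endpoint asymptotics apply. I will check this directly from the sandwich bounds, using the fact that $F_D(-a)=0$ for an absolutely continuous law with lower endpoint $-a$.
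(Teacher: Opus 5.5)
Your proposal is correct and follows the paper's proof: the same integral representation through $U_{(1)}=F_D(m_n)$, the same endpoint quantile inversion, the same power bound with $p=4\beta_O/\beta_D$, and the same split of the integral as in Proposition~\ref{prop:heavytail}. Your $\varepsilon$-sandwich for $F_D^{\leftarrow}(u)+a$ makes explicit the inversion step that the paper calls standard. It correctly uses continuity of $F_D$, through $F_D(F_D^{\leftarrow}(u))=u$ and $F_D(-a)=0$.
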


\begin{proof}
Again let $U_{(1)}:=F_D(m_n)$, so $U_{(1)}$ has density $n(1-u)^{n-1}$, $m_n$ has the same distribution as $F_D^{\leftarrow}(U_{(1)})$, and
\[
\E[F_O(m_n)^4]
= n\int_0^1 F_O(F_D^{\leftarrow}(u))^4(1-u)^{n-1}\,du.
\]
The endpoint relation implies
\[
F_D^{\leftarrow}(u)+a\sim c_D^{-1/\beta_D}u^{1/\beta_D}\qquad (u\downarrow0).
\]
Evaluating the $F_O$ asymptotic at this quantile yields
\[
F_O\bigl(F_D^{\leftarrow}(u)\bigr)
\sim c_Oc_D^{-\beta_O/\beta_D}u^{\beta_O/\beta_D}\qquad (u\downarrow0).
\]
Setting $p:=4\beta_O/\beta_D$, the same asymptotic power bound for $F_O(F_D^{\leftarrow}(u))^4$ near $u=0$ and the same split of the expectation integral as in Proposition~\ref{prop:heavytail} give $\E[F_O(m_n)^4]=O(n^{-p})$. Since $\beta_O>5\beta_D/4$, we have $p>5$, and Assumption~\ref{ass:generator}(A3) follows.
\end{proof}

The shifted exponential case lies just outside the sufficient regime in Proposition~\ref{prop:uniformtype}. Suppose, for example, that $Q_{ii}+a$ are i.i.d. exponential random variables with rate $\lambda_D$ and $Q_{ij}+a$ are i.i.d. exponential random variables with rate $\lambda_O$; both distributions then have the common lower endpoint $-a$. Writing $T:=m_n+a$, we have $T\sim\mathrm{Exp}(n\lambda_D)$ and $F_O(m_n)=1-e^{-\lambda_O T}$. Therefore
\[
\E[F_O(m_n)^4]
=\int_0^\infty (1-e^{-\lambda_O t})^4 n\lambda_D e^{-n\lambda_D t}\,dt.
\]
With the change of variables $v=n\lambda_D t$, this becomes
\[
\int_0^\infty \left(1-e^{-(\lambda_O/(n\lambda_D))v}\right)^4 e^{-v}\,dv.
\]
Set $a_n:=\lambda_O/(n\lambda_D)$. Dividing by $a_n^4$, the normalized integrand is
\[
\left(\frac{1-e^{-a_nv}}{a_n}\right)^4 e^{-v}.
\]
It converges pointwise to $v^4e^{-v}$, and it is dominated by $v^4e^{-v}$ because $1-e^{-z}\le z$ for $z\ge0$. Dominated convergence therefore gives
\[
\E[F_O(m_n)^4]
\sim a_n^4\int_0^\infty v^4e^{-v}\,dv
=24\left(\frac{\lambda_O}{\lambda_D}\right)^4 n^{-4}.
\]
Consequently $n^5\E[F_O(m_n)^4]$ diverges linearly in $n$, so Assumption~\ref{ass:generator}(A3) fails in the case where both the diagonal and off-diagonal laws are shifted exponentials with the same lower endpoint. These laws satisfy the endpoint asymptotic form $F_D(-a+t)\sim \lambda_D t$ and $F_O(-a+t)\sim \lambda_O t$, so $\beta_D=\beta_O=1$; they are therefore not covered by Proposition~\ref{prop:uniformtype}, whose sufficient condition requires $\beta_O>5\beta_D/4$.

These propositions identify \emph{sufficient} regimes for exactness, though they may not be tight thresholds. Once size-five defect components become likely, the first dimension where a DNN-versus-CP separation could arise has been reached, but a relaxation gap need not occur on a typical random five-vertex block.

\subsection{Algorithmic consequences}
The proof of our main theorem is constructive. Given the matrix entries, one may compute $m_n$, build the defect graph by scanning the off-diagonal entries, extract its connected components, and solve the local KKT systems on components of sizes at most four. Since the input has $N=n^2$ entries, this yields an exact algorithm operating in $O(n^2)$ time on the high-probability event of Theorem~\ref{thm:goe-main}; equivalently, the running time is linear in the input size.

\subsection{The size-five frontier}
The theorem shows that components of sizes one through four are harmless for exactness of the local DNN relaxation. Thus the first dimension in which a DNN-versus-CP separation could affect exactness is dimension five, in accordance with the classical separation between $\DNN^5$ and $\CP^5$; see Burer, Anstreicher, and D\"ur~\cite{BurerAnstreicherDur2009}. At the level of first-moment bounds in the GOE regime, five-vertex trees are the dominant connected obstructions: any denser connected graph on five labeled vertices uses at least five edges and therefore contributes at most $O\!\bigl(n^5\E[q_n^5]\bigr)=O\!\bigl((\ln n)^{5/2}/n^5\bigr)$ by Lemma~\ref{lem:moments-goe}, which is negligible compared with the tree contribution $O\!\bigl((\ln n)^2/n^3\bigr)$. Quantifying whether random five-vertex defect components actually generate a localized relaxation gap remains an open problem.

\subsection{Beyond GOE}
The defect-graph viewpoint extends formally beyond GOE. In sample-covariance or Wishart-type models, however, the edge indicators are no longer conditionally independent. The simple tree count must then be replaced by a dependent random-graph estimate; see Bai and Silverstein~\cite{BaiSilverstein2010}. Likewise, banded or spatially dependent ensembles would require a geometric analysis rather than a purely Erd\H{o}s-R\'enyi perspective.

\section*{Acknowledgements}
The author thanks Shuai Li for a careful reading of the manuscript and comments that improved its presentation.

\section*{Declarations}
\textbf{Funding.} No funding was received to assist with the preparation of this manuscript.

\textbf{Competing interests.} The author has no relevant financial or non-financial interests to disclose.

\textbf{Data availability.} Data sharing is not applicable to this article as no datasets were generated or analyzed during the current study.

\end{document}